\documentclass[11pt]{amsart}
\usepackage{amssymb}
\usepackage{mathrsfs}
\usepackage{graphicx}
\usepackage{tabularx}
\usepackage{setspace}
\usepackage{longtable}
\usepackage{geometry}
\usepackage{cite}
\usepackage{mathtools}
\usepackage{bm}
\usepackage{enumitem}
\usepackage{hyperref}
\usepackage{subcaption}
\usepackage{tikz}
\usepackage{tikz-cd}

\scshape
\date{}
\newtheorem{thm}{\bf Theorem}[section]
\newtheorem{cor}[thm]{\bf Corollary}
\newtheorem{lem}[thm]{\bf Lemma}

\newtheorem{defn}[thm]{\bf Definition}

\newtheorem{rem}[thm]{\bf Remark}
\newtheorem{exam}[thm]{\bf Example}

\newcommand{\Irr}{\mathrm{Irr}}
\newcommand{\cd}{\mathrm{cd}}
\usepackage{amsmath,amssymb}

\begin{document}

\title[Splitting fields and spectral invariants]{Splitting fields and spectral invariants of character degree graphs in solvable groups}

\author [G. Sivanesan]{G. Sivanesan}
\address{Department of Mathematics, Government College of Engineering, Salem 636011, Tamil Nadu, India, ORCID: 0000-0001-7153-960X.}
\email{sivanesan@gcesalem.edu.in}

\author [C. Selvaraj]{C. Selvaraj}
\address{Department of Mathematics, Periyar University, Salem 636011, Tamil Nadu, India,  ORCID:  0000-0002-4050-3177.}
\email{selvavlr@yahoo.com}

\author[Laubacher]{Jacob Laubacher}
\address{Department of Mathematics, Hillsdale College, Hillsdale, Michigan 49242, USA, ORCID:0000-0003-0045-7951.}
\email{jlaubacher@hillsdale.edu}

\keywords{character degree graph, splitting field, solvable group, eigenvalue, Galois group, regular graph, irreducibility, spectral graph theory\\\indent\emph{Corresponding author.} Jacob Laubacher :\href{mailto:jlaubacher@hillsdale.edu}{jlaubacher@hillsdale.edu}}

\subjclass[2020]{05C50, 20C15, 11R32, 05C25, 11R18}

\begin{abstract}
In this paper, we investigate the eigenvalues of character degree graphs, 
with particular emphasis on the arithmetic properties of their spectra. 
First, we study \((n-2)\)-regular character degree graphs of solvable groups 
and derive an explicit formula for their characteristic polynomials. We show 
that all their eigenvalues are rational and, consequently, that their splitting 
field is \(\mathbb{Q}\). We then consider supergraphs obtained by adding edges 
to these graphs and prove that the corresponding splitting field is a quadratic 
extension of \(\mathbb{Q}\).

Next, using their structural decomposition, we examine a general class of Lewis 
graphs. For this class, we establish bounds on both the number of irrational 
eigenvalues and the degree of the associated splitting fields. Finally, we 
investigate prime character degree graphs of diameter \(3\), focusing on the 
arithmetic nature of their eigenvalues and the degree of their splitting fields.
\end{abstract}

\date{}
\maketitle

\section{Introduction}

Let $G$ be a finite solvable group. Denote by $\Irr(G)$ the set of all irreducible complex characters of $G$, and define the set of character degrees by
\[
\cd(G)=\{\chi(1)\mid \chi\in\Irr(G)\}.
\]
Let $\rho(G)$ be the set of all prime divisors of the degrees in $\cd(G)$. The \emph{character degree graph} of $G$, denoted by $\Delta(G)$, is the simple undirected graph with vertex set $\rho(G)$, where two distinct primes $p$ and $q$ are adjacent if and only if $pq$ divides $\chi(1)$ for some $\chi\in\Irr(G)$.

The study of character degree graphs was initiated by Manz \emph{et al.} and has since become an important tool in the study of finite solvable groups \cite{ManzStaszewski1988}. Fundamental results show that $\Delta(G)$ has at most two connected components \cite{Manz1985}, and Manz and Wolf~\cite{ManzWolf1989} proved that the sum of the diameters of these components is at most three. A comprehensive overview of character degree graphs appears in the survey by Lewis~\cite{Lewis2008}. Further classification approaches were developed by Bissler \emph{et al.}~\cite{Bissler2019b,Bissler2025}, DeGroot \emph{et al.}~\cite{LJ}, and Laubacher \emph{et al.}~\cite{LM, LMS}, who investigated families of graphs realizable as $\Delta(G)$.

A central problem is to determine which graphs arise as character degree graphs of solvable groups. Important advances include Hamiltonian character degree graphs \cite{Ebrahimi2015}, Eulerian character degree graphs \cite{Sivanesan2024}, metric dimension investigations \cite{Cameron2025}, and graphs containing cut vertices \cite{Hafezieh2021}. These results illustrate that the structure of $\Delta(G)$ is closely tied to the algebraic properties of $G$.

Spectral graph theory has recently provided new insight into character degree graphs. Eigenvalues of the adjacency matrix encode structural information such as connectivity, diameter, and regularity. Ebrahimi \emph{et al.}~\cite{Ebrahimi2023} studied regular character degree graphs with spectra contained in $[-2,\infty)$, while Hafezieh \emph{et al.}~\cite{Hafezieh2021} analyzed relations between eigenvalues and cut vertices.

Let $A=A(\Delta(G))$ denote the adjacency matrix of $\Delta(G)$ and
\[
P(\lambda)=\det(\lambda I-A)
\]
its characteristic polynomial. We investigate the \emph{splitting field} of $P(\lambda)$ over $\mathbb{Q}$, which measures the algebraic complexity of the spectrum. Field-theoretic methods have been applied successfully to integral graphs \cite{Harary1974}, circulant graphs \cite{Monius2022}, and normal Cayley graphs \cite{Wu2024}.

In this paper we compute characteristic polynomials for several families of character degree graphs, determine their splitting fields.

\section{Notation and Preliminaries}

We adopt the following notation throughout the paper.

\begin{itemize}
\item $G$ always denotes a finite solvable group.
\item $\Gamma$ denotes a finite simple graph.
\end{itemize}

All graphs considered are finite and simple. The eigenvalues of a graph $\Gamma$ are the eigenvalues of its adjacency matrix $A(\Gamma)$, and the multiset of eigenvalues is called the \emph{spectrum}, denoted by $\mathrm{spec}(\Gamma)$ (see~\cite{Cvetkovic1995,Cvetkovic2010}).

Let $H$ be a graph with vertex set $\{v_1,\ldots,v_k\}$ and let $\mathcal{F}=\{\Gamma_1,\ldots,\Gamma_k\}$ be a family of graphs. The \emph{$H$-join} $\bigvee_H\mathcal{F}$ replaces each vertex $v_i$ with $\Gamma_i$ and connects every vertex of $\Gamma_i$ to every vertex of $\Gamma_j$ whenever $v_iv_j\in E(H)$.

A generalization, called the \emph{$H$-generalized join constrained by vertex subsets} \cite{Cardoso2013a,Cardoso2013b}, uses subsets $\mathcal{S}=\{S_1,\ldots,S_k\}$ with $S_i\subseteq V(\Gamma_i)$, where edges are added only between vertices of $S_i$ and $S_j$ whenever $v_iv_j\in E(H)$.

If all $\Gamma_i=\Gamma$, this construction reduces to the lexicographic product $H[\Gamma]$~\cite{Godsil2001}.

\subsection*{Generalization of Fiedler’s Lemma}

\begin{rem}[Notation Convention]
The symbol $\rho(G)$ denotes the set of prime divisors of character degrees. To avoid conflict with this standard notation, the coupling parameters appearing in Fiedler's lemma and its generalizations \cite{Fiedler1974,Cardoso2011} are denoted by $\gamma$ and $\gamma_j$ instead of the original symbol $\rho$ used in those results. This modification is purely notational.
\end{rem}

\begin{lem}[Fiedler~\cite{Fiedler1974}]
Let $A\in\mathbb{R}^{m\times m}$ and $B\in\mathbb{R}^{n\times n}$ be symmetric matrices with eigenvalues $\alpha_1,\ldots,\alpha_m$ and $\beta_1,\ldots,\beta_n$, and unit eigenvectors $u,v$ corresponding to $\alpha_1,\beta_1$. Then
\[
C=
\begin{bmatrix}
A & \gamma\,uv^T\\
\gamma\,vu^T & B
\end{bmatrix}
\]
has eigenvalues $\alpha_2,\ldots,\alpha_m, \beta_2,\ldots,\beta_n$ together with the eigenvalues of
\[
\widehat C=
\begin{bmatrix}
\alpha_1 & \gamma\\
\gamma & \beta_1
\end{bmatrix}.
\]
\end{lem}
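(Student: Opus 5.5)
Our plan is to exhibit an explicit orthogonal basis of $\mathbb{R}^{m+n}$ consisting of eigenvectors of $C$ (or, for the last two, a $C$-invariant two-dimensional subspace). By the spectral theorem we may choose orthonormal eigenvector bases $u=u_1,u_2,\ldots,u_m$ of $A$ and $v=v_1,v_2,\ldots,v_n$ of $B$, where $Au_i=\alpha_iu_i$ and $Bv_j=\beta_jv_j$. Since $u$ and $v$ are unit eigenvectors for $\alpha_1$ and $\beta_1$, we can take them as the first members of these bases. When $\alpha_1$ or $\beta_1$ has multiplicity greater than one, we complete the basis inside the eigenspace.

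First, we check that for $i\ge 2$ the vector $\begin{bmatrix}u_i\\0\end{bmatrix}$ is an eigenvector of $C$ with eigenvalue $\alpha_i$. Indeed,
\[
C\begin{bmatrix}u_i\\0\end{bmatrix}=\begin{bmatrix}Au_i\\ \gamma\, v\,(u^Tu_i)\end{bmatrix}=\begin{bmatrix}\alpha_iu_i\\0\end{bmatrix},
\]
because $u^Tu_i=0$ by orthogonality. In the same way, for $j\ge2$ the vector $\begin{bmatrix}0\\v_j\end{bmatrix}$ is an eigenvector with eigenvalue $\beta_j$, since $u\,v^Tv_j=0$. This produces $m+n-2$ mutually orthogonal eigenvectors, which account for the eigenvalues $\alpha_2,\ldots,\alpha_m,\beta_2,\ldots,\beta_n$.

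Next, we consider the subspace $W=\mathrm{span}\{\hat u,\hat v\}$, where $\hat u=\begin{bmatrix}u\\0\end{bmatrix}$ and $\hat v=\begin{bmatrix}0\\v\end{bmatrix}$. Using $u^Tu=v^Tv=1$, we compute $C\hat u=\alpha_1\hat u+\gamma\hat v$ and $C\hat v=\gamma\hat u+\beta_1\hat v$. So $W$ is $C$-invariant, and in the orthonormal basis $\{\hat u,\hat v\}$ the restriction $C|_W$ has matrix $\widehat C$. Therefore, if $\widehat C\begin{bmatrix}a\\b\end{bmatrix}=\mu\begin{bmatrix}a\\b\end{bmatrix}$, then $a\hat u+b\hat v$ is an eigenvector of $C$ with eigenvalue $\mu$. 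Because $\widehat C$ is symmetric, it has two orthogonal eigenvectors, and these give two further eigenvectors of $C$. Both lie in $W$, which is orthogonal to all the vectors found in the previous step.

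Finally, we count. There are $(m-1)+(n-1)+2=m+n$ mutually orthogonal, hence linearly independent, eigenvectors of $C$. They therefore form a basis, and the multiset of eigenvalues of $C$ is exactly the one claimed, with multiplicities correct. The only delicate point is this multiplicity bookkeeping, in particular making sure that no eigenvalue is double-counted when eigenvalues repeat. Choosing orthonormal bases at the outset handles this, since linear independence then follows immediately from orthogonality.
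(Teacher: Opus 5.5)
Your proof is correct and complete. The vectors $\begin{bmatrix}u_i\\0\end{bmatrix}$ and $\begin{bmatrix}0\\v_j\end{bmatrix}$ for $i,j\ge 2$, together with the orthonormal eigenvectors of $C|_W\cong\widehat C$, form an orthonormal eigenbasis of $C$ with exactly the claimed eigenvalues and multiplicities. This is the standard argument, equivalent to conjugating $C$ by the orthogonal matrix $\mathrm{diag}(U,V)$ with $U=[u\;u_2\cdots u_m]$ and $V=[v\;v_2\cdots v_n]$. The paper gives no proof of its own for this lemma and simply cites Fiedler~\cite{Fiedler1974}, so there is no separate route to compare with.
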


\begin{lem}[Generalized Version~\cite{Cardoso2011}]
Let $A_j\in\mathbb{R}^{n_j\times n_j}$ be symmetric matrices with orthonormal eigenvectors $\mathbf u_{ij}$ and eigenvalues $\alpha_{ij}$. Then
\[
C=
\begin{pmatrix}
A_1 & \gamma_1u_{11}u_{12}^T & &\\
\gamma_1u_{12}u_{11}^T & A_2 & \ddots &\\
& \ddots & \ddots & \gamma_{k-1}u_{1(k-1)}u_{1k}^T\\
& & \gamma_{k-1}u_{1k}u_{1(k-1)}^T & A_k
\end{pmatrix}
\]
has spectrum
\[
\bigcup_{j=1}^k
\left(\sigma(A_j)\setminus\{\alpha_{1j}\}\right)
\cup
\sigma(\widehat C),
\]
where $\widehat C$ is tridiagonal with diagonal entries
$\alpha_{1j}$ and off-diagonal entries $\gamma_j$.
\end{lem}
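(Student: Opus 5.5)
The plan is to reduce the generalized statement to repeated applications of Fiedler's Lemma, building the matrix $C$ one block at a time. The key observation is that the blocks of $C$ are coupled only through the first eigenvectors $u_{1j}$. So at each stage, the accumulated matrix has an explicit orthonormal eigenbasis that contains all the $u_{ij}$ with $i\ge 2$, padded by zeros, together with a small number of vectors supported on the span of the $u_{1j}$'s.

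Rather than strict induction, I would write down an explicit eigenbasis of $C$ directly.

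\textbf{Step 1.} For each $j$ and each $i\ge 2$, let $\tilde u_{ij}\in\mathbb{R}^{n_1+\cdots+n_k}$ be $u_{ij}$ placed in the $j$-th block, with zeros elsewhere. Multiplying out, the $j$-th block row of $C\tilde u_{ij}$ is $A_ju_{ij}=\alpha_{ij}u_{ij}$. The neighbouring block rows are $\gamma u_{1,j\pm1}(u_{1j}^Tu_{ij})=0$ by orthogonality. Hence $\tilde u_{ij}$ is an eigenvector with eigenvalue $\alpha_{ij}$. This yields $\sum_j(n_j-1)$ orthonormal eigenvectors, which account for $\bigcup_j\bigl(\sigma(A_j)\setminus\{\alpha_{1j}\}\bigr)$ as multisets.

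\textbf{Step 2.} Let $W$ be the span of $\tilde u_{11},\dots,\tilde u_{1k}$. This is a $k$-dimensional space, orthogonal to all the vectors of Step 1. Compute $C\tilde u_{1j}$. Its $j$-th block is $\alpha_{1j}u_{1j}$. Its $(j-1)$-th block is $\gamma_{j-1}u_{1(j-1)}(u_{1j}^Tu_{1j})=\gamma_{j-1}u_{1(j-1)}$, and similarly its $(j+1)$-th block is $\gamma_j u_{1(j+1)}$. Therefore $C\tilde u_{1j}=\gamma_{j-1}\tilde u_{1(j-1)}+\alpha_{1j}\tilde u_{1j}+\gamma_j\tilde u_{1(j+1)}$, so $W$ is $C$-invariant. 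In the orthonormal basis $\{\tilde u_{1j}\}$, the restriction of $C$ to $W$ has exactly the tridiagonal matrix $\widehat C$ as its matrix.

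\textbf{Step 3.} Since $C$ is symmetric and the Step 1 vectors together with $W$ give an orthogonal decomposition of the whole space into invariant pieces, the spectrum of $C$ is the union of the Step 1 eigenvalues and $\sigma(\widehat C)$. The dimension count $\sum_j(n_j-1)+k=\sum_j n_j$ confirms that nothing is missed.

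The only delicate point is the careful bookkeeping of the off-diagonal block products. One must check that they annihilate $u_{ij}$ for $i\ge2$ and reproduce $\gamma_j$ on the $u_{1j}$. The latter uses that the $u_{1j}$ are unit vectors. Everything else is routine linear algebra. In the case $k=2$ the argument specializes to Fiedler's Lemma itself.
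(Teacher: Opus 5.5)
Your proof is correct and complete. The padded vectors $\tilde u_{ij}$ with $i\ge 2$ are eigenvectors because each coupling block $\gamma\,u_{1,j\pm1}u_{1j}^T$ annihilates $u_{ij}$. The subspace $W=\operatorname{span}\{\tilde u_{1j}\}$ is $C$-invariant, and in that orthonormal basis its matrix is exactly $\widehat C$; the unit length of $u_{1j}$ is what makes the off-diagonal entries exactly $\gamma_j$. The dimension count then closes the argument.

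The paper does not prove this lemma; it simply quotes it from Cardoso et al., so there is no in-paper argument to compare against. What you wrote is the standard proof made explicit. Equivalently, conjugate $C$ by the orthogonal matrix $\operatorname{diag}(U_1,\dots,U_k)$, where $U_j$ has columns $u_{1j},\dots,u_{n_jj}$, and then permute coordinates to obtain $\widehat C\oplus\operatorname{diag}(\alpha_{ij})_{i\ge 2}$.

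You were right to abandon your opening idea of iterating Fiedler's lemma block by block. After gluing the first $k-1$ blocks, the padded vector $\tilde u_{1(k-1)}$ through which block $k$ couples is in general no longer an eigenvector of the accumulated matrix. Fiedler's hypothesis therefore fails at the next step, whereas your direct invariant-subspace argument avoids this issue entirely.
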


\begin{thm}{\cite{Cardoso2011}}\label{1.1}
Let $\Gamma_j$ be $p_j$-regular graphs on $n_j$ vertices. Define
\[
\widetilde{\Gamma}
=\bigoplus_{j=1}^{k}\Gamma_j,
\qquad
\gamma_j=\sqrt{n_jn_{j+1}}.
\]
Then
\[
\sigma(\widetilde{\Gamma})
=
\left(
\bigcup_{j=1}^{k}
(\sigma(\Gamma_j)\setminus\{p_j\})
\right)
\cup
\sigma(\widehat C).
\]
\end{thm}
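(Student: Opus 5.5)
The approach is to read $\widetilde{\Gamma}=\bigoplus_{j=1}^k\Gamma_j$ as the $P_k$-join of the $\Gamma_j$: the vertex set is the disjoint union of the $V(\Gamma_j)$, and every vertex of $\Gamma_j$ is joined to every vertex of $\Gamma_{j+1}$. With this reading, the adjacency matrix has exactly the block-tridiagonal shape of the generalized Fiedler lemma, and the result should follow from that lemma once the rank-one coupling blocks are identified.

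\emph{Step 1 (block form).} Order the vertices block by block. Then
\[
A(\widetilde{\Gamma})=\begin{pmatrix} A_1 & J_{n_1\times n_2} & & \\ J_{n_2\times n_1} & A_2 & \ddots & \\ & \ddots & \ddots & J_{n_{k-1}\times n_k}\\ & & J_{n_k\times n_{k-1}} & A_k\end{pmatrix},
\]
where $A_j=A(\Gamma_j)$ and $J$ denotes an all-ones matrix.

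\emph{Step 2 (rank-one coupling).} Since $\Gamma_j$ is $p_j$-regular, the vector $u_{1j}=n_j^{-1/2}\mathbf{1}_{n_j}$ is a unit eigenvector of $A_j$ for the eigenvalue $\alpha_{1j}=p_j$. Because $A_j$ is real symmetric, we extend $u_{1j}$ to an orthonormal eigenbasis of $\mathbb{R}^{n_j}$. A direct check then gives
\[
J_{n_j\times n_{j+1}}=\sqrt{n_jn_{j+1}}\;u_{1j}u_{1(j+1)}^T=\gamma_j\,u_{1j}u_{1(j+1)}^T.
\]

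\emph{Step 3 (apply the lemma).} With these choices, the generalized Fiedler lemma applies verbatim. It yields
\[
\sigma(\widetilde{\Gamma})=\bigcup_j\bigl(\sigma(\Gamma_j)\setminus\{p_j\}\bigr)\cup\sigma(\widehat C),
\]
where $\widehat C$ is the tridiagonal matrix with diagonal entries $p_1,\dots,p_k$ and off-diagonal entries $\gamma_j=\sqrt{n_jn_{j+1}}$.

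\emph{Step 4 (direct check, if needed).} If one prefers not to rely on the lemma's exact indexing, the argument can be made directly. Take an eigenvector $x$ of $A_j$ orthogonal to $\mathbf{1}_{n_j}$ and extend it by zeros to the other blocks. Since $Jx=0$, this is an eigenvector of $A(\widetilde{\Gamma})$ with the same eigenvalue, which gives $n_j-1$ eigenvalues from each block. The remaining $k$-dimensional invariant subspace is spanned by the normalized block indicator vectors $u_{1j}$. In that orthonormal basis, the restriction of $A(\widetilde{\Gamma})$ is exactly $\widehat C$, since $u_{1j}^TJu_{1(j+1)}=\sqrt{n_jn_{j+1}}$.

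The main obstacle is bookkeeping rather than substance. First, $\sigma(\Gamma_j)\setminus\{p_j\}$ must be read as removing one copy of $p_j$ from the multiset, since $p_j$ can occur more than once when $\Gamma_j$ is disconnected. Second, the symbol $\bigoplus$ must be confirmed to mean the path-join rather than a disjoint union. For the multiplicity point, the direct orthogonal-complement argument in Step 4 settles it cleanly, because it removes exactly the one direction $\mathbf{1}_{n_j}$ from each block.
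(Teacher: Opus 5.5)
Your proof is correct and takes the same route the paper relies on: the paper gives no argument of its own for Theorem~\ref{1.1} and simply cites Cardoso et al., where the result is obtained as you obtain it, by writing each off-diagonal all-ones block as $\sqrt{n_jn_{j+1}}\,u_{1j}u_{1(j+1)}^T$ with $u_{1j}=n_j^{-1/2}\mathbf{1}_{n_j}$ and then applying the generalized Fiedler lemma. Your direct eigenvector check in Step~4 is sound, and so are your clarifications that one copy of $p_j$ is removed from the multiset and that $\bigoplus$ denotes the path-join, which is consistent with the paper's later use for $P_4[K_{r_4},K_{r_3},K_{r_2},K_{r_1}]$.
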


\begin{rem}\label{rem:method}
All spectral results in this paper follow from Theorem~\ref{1.1}. Whenever a graph admits a generalized join decomposition
\[
\widetilde{\Gamma}=\bigoplus_{j=1}^{k}\Gamma_j,
\]
its spectrum is obtained by combining the nontrivial eigenvalues of the component graphs $\Gamma_j$ with the eigenvalues of the quotient matrix $\widehat C$. Hence eigenvalue computations reduce to determining $\sigma(\widehat C)$.
\end{rem}

\begin{rem}{\cite{Lewis2008}}\label{rem:question}
Let $n$ and $N$ be integers greater than $1$ with $N \ge 2^{\,n}-1$. It is natural to ask whether there exists a solvable group $G$ whose character degree graph $\Delta(G)$ has exactly two connected components of cardinalities $n$ and $N$, respectively.

At present, there is no known pair $(n,N)$ for which such a solvable group fails to exist. In particular, for every positive integer $N$, one can construct a solvable group $G$ such that $\Delta(G)$ has exactly two connected components, one of which is an isolated vertex and the other containing $N$ vertices.

We briefly outline a standard construction illustrating this phenomenon. Let $p$ be an odd prime. By Zsigmondy’s prime theorem, there exists an integer $a$ such that $p^{a}-1$ has at least $N$ distinct prime divisors. Choose a divisor $b$ of $p^{a}-1$ with exactly $N$ distinct prime divisors, and let $E$ be an extra-special $p$-group of order $p^{2a+1}$ and exponent $p$. Then $E$ admits an automorphism $\sigma$ of order $b$ that centralizes $Z(E)$. Setting $G = E \rtimes \langle \sigma \rangle$, one obtains
\[
\mathrm{cd}(G) = \{1,\, b,\, p^{a}\}.
\]
Consequently, the connected components of $\Delta(G)$ are precisely $\{p\}$ and $\pi(b)$. Thus $\Delta(G)$ has two connected components, one consisting of a single vertex and the other consisting of $N$ vertices.
\end{rem}

\begin{lem}\label{lem:two-components}
\textnormal{\cite[Lemma~2.6]{Lewis2008}}
For every positive integer $N$, there exists a solvable group $G$ such that the character degree graph $\Delta(G)$ has exactly two complete, connected components: one component consists of a single isolated vertex, and the other component has $N$ vertices.
\end{lem}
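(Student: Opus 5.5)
We take the construction outlined in Remark~\ref{rem:question} and verify each step. Fix an odd prime $p$ and a positive integer $N$. First we produce an exponent $a$ such that $p^{a}-1$ has at least $N$ distinct prime divisors. Zsigmondy's theorem does this. For each $m\ge 3$ (with finitely many exceptions, none of which occur for odd $p$ except possibly $m=2$), $p^{m}-1$ has a primitive prime divisor $\ell_m$, meaning $\ell_m\mid p^m-1$ but $\ell_m\nmid p^j-1$ for $j<m$. Primitive divisors for distinct $m$ are distinct. Take $a=N!$, or more simply $a=\mathrm{lcm}(3,4,\dots,N+2)$. Then $p^{a}-1$ is divisible by $p^m-1$ for each $m\in\{3,\dots,N+2\}$, so it has at least $N$ distinct prime divisors $\ell_3,\dots,\ell_{N+2}$. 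Let $b$ be the product of $N$ of these primes. Then $b\mid p^a-1$, $b$ is squarefree with exactly $N$ prime factors, and $p\nmid b$.

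Next we build the group. Let $E$ be extraspecial of order $p^{2a+1}$ and exponent $p$. Then $E/Z(E)\cong \mathbb{F}_p^{2a}$, which we identify with $\mathbb{F}_{p^a}^2$ carrying a symplectic form. The group $\mathrm{Sp}(2a,p)$ contains a cyclic subgroup $\langle\tau\rangle$ of order $p^a-1$, acting as $\mathrm{diag}(\lambda,\lambda^{-1})$ on $\mathbb{F}_{p^a}\oplus\mathbb{F}_{p^a}$. This subgroup lifts to automorphisms of $E$ centralizing $Z(E)$, using the standard fact that for $p$ odd and exponent $p$, every element of $\mathrm{Sp}$ lifts to an automorphism trivial on the centre. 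Let $\sigma$ be such a lift of a power of $\tau$ of order $b$. Since $b$ is coprime to $p$, we may take $\sigma$ of order exactly $b$, for instance via a Schur--Zassenhaus or coprime-lifting argument. Set $G=E\rtimes\langle\sigma\rangle$. It is solvable, being a $p$-group extended by a cyclic group.

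Now we compute $\mathrm{cd}(G)$ by Clifford theory over the normal subgroup $E$. The nonlinear characters of $E$ are the $p-1$ characters of degree $p^a$. They are determined by their central characters, and $\sigma$ fixes $Z(E)$, so each is $\sigma$-invariant. Since $\gcd(|E|,|\langle\sigma\rangle|)=1$, each extends to $G$. Gallagher's theorem then gives degrees $p^a\cdot 1$, because $\langle\sigma\rangle$ is abelian. The linear characters of $E$ correspond to $E/E'\cong\mathbb{F}_p^{2a}$. The element $\sigma^j$ fixes a nonzero vector only if $\lambda^j$ or $\lambda^{-j}$ equals $1$. The key point is that every nontrivial element of $\langle\tau\rangle$ acts fixed-point-freely. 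So every nontrivial linear character has stabilizer trivial in $\langle\sigma\rangle$ and induces to a character of degree $b$. The trivial character extends to linear characters. Hence $\mathrm{cd}(G)=\{1,b,p^a\}$.

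Finally, $\rho(G)=\{p\}\cup\pi(b)$. All primes of $b$ divide the single degree $b$, so they form a complete subgraph on $N$ vertices. The prime $p$ is adjacent to nothing, since no degree is divisible by $p\ell$. This gives exactly two complete components of sizes $1$ and $N$. The main obstacle is the lifting step: realizing the fixed-point-free cyclic subgroup of $\mathrm{Sp}(2a,p)$ as genuine automorphisms of $E$ of order $b$ acting trivially on $Z(E)$. I would handle it by citing the standard result that $\mathrm{Out}$ of the exponent-$p$ extraspecial group contains $\mathrm{Sp}(2a,p)$ split over $\mathrm{Inn}(E)$.
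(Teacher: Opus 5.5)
Your proof is correct and follows the construction that the paper outlines in Remark~\ref{rem:question}: Zsigmondy gives $b \mid p^a-1$ with exactly $N$ prime factors, $\sigma$ is a fixed-point-free automorphism of order $b$ of the exponent-$p$ extraspecial group that centralizes $Z(E)$, and then $\mathrm{cd}(E\rtimes\langle\sigma\rangle)=\{1,b,p^a\}$. You also correctly supply the lifting and Clifford-theory verification that the paper leaves to \cite{Lewis2008}. One small slip: the alternative choice $a=N!$ does not work in general (for $N=2$ and $p=3$ one gets $3^2-1=8$, which has a single prime divisor), so keep $a=\operatorname{lcm}(3,\dots,N+2)$, which is the choice your argument actually uses.
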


\begin{defn}\cite[p. 502]{Bissler2019a}\label{direct product} Using direct product, one can construct higher order character degree graphs. For two groups $A$ and $B$ where $\rho(A)$ and  $\rho(B)$ are disjoint,  we have that $\rho(A\times B) =  \rho(A) \cup \rho(B)$.  Define an edge between vertices $p$ and $q$ in $\rho(A\times B)$  if any of the following is satisfied:
\begin{itemize}
\item [\rm (i)] $p , q\in \rho (A)$ and there is an edge between $p$ and $q$ in $\Delta(A);$
\item [\rm (ii)] $p, q \in \rho (B)$ and there is an edge between $p$ and $q$ in $\Delta(B);$
\item [\rm (iii)] $p\in\rho (A)$ and $q\in\rho (B);$
\item [\rm (iv)]  $p \in\rho (B) $ and $q\in\rho (A).$ 
\end{itemize}
Now we get a higher order character degree graph and it is called direct product and the same is denoted by $\Delta(A\times B).$
\end{defn}

\begin{thm}[P\'alfy’s Three Prime Theorem~\cite{Palfy1998}]
For any three distinct vertices in \( \Delta(G) \), at least one edge connects two of them. In particular, \( \Delta(G) \) has at most two connected components.
\end{thm}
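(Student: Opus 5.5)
The plan is to follow Pálfy's original route: first reduce to a minimal counterexample, then analyse a chief factor by orbit-counting arguments in the style of Manz–Wolf. The "in particular" clause is purely graph-theoretic and I would dispatch it first. If \(\Delta(G)\) had three or more connected components, choose one vertex from each of three different components. No two of these vertices are adjacent, which contradicts the main assertion. So everything rests on the three-prime statement.

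For the main assertion I would argue by contradiction. Suppose \(p,q,r\in\rho(G)\) are distinct and pairwise non-adjacent, and take \(G\) of minimal order with this property. Non-adjacency means that no \(\chi\in\Irr(G)\) has degree divisible by \(pq\), \(pr\) or \(qr\). Character degrees of quotients are character degrees of \(G\), so quotients can only lose vertices and edges. Minimality therefore gives that every proper quotient \(G/N\) has \(\{p,q,r\}\not\subseteq\rho(G/N)\).

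I would then take a minimal normal subgroup \(V\), which is elementary abelian because \(G\) is solvable. Some prime, say \(p\), divides \(\chi(1)\) only for characters \(\chi\) not containing \(V\) in their kernel. For \(\theta\in\Irr(V)\) nonlinear in the orbit sense, Clifford theory says \(|G:I_G(\theta)|\) divides the degree of every \(\chi\) lying over \(\theta\). Hence each orbit size of \(G\) on \(\widehat V\) is divisible by at most one of \(p,q,r\). The next step is to reduce to the case where \(G/V\) acts faithfully and irreducibly on \(V\). In that case I would use Gallagher-type arguments on stabilizers to transfer divisibility by \(q\) and \(r\), coming from characters of \(G/V\), into stabilizers of \(\theta\).

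This leads to a configuration in which every nontrivial orbit of a solvable linear group on \(V\) has size which is a power of a single prime (one of \(p,q,r\) times a number coprime to the others), while all three primes occur among \(|G/V|\) and the orbit lengths. The final contradiction comes from the classification of solvable linear groups all of whose orbits have \(\{p,q,r\}\)-restricted sizes. This is essentially the semilinear case \(\Gamma(p^n)\), handled via Zsigmondy primes, together with the orbit-size theorems of Manz–Wolf, where one shows that some orbit size must be divisible by two of the primes.

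I expect this orbit analysis on the module \(V\) to be the main obstacle. It requires the deep structure theory of solvable linear groups (Huppert's and Manz–Wolf's results) rather than anything developed in this paper. Realistically I would cite \cite{Palfy1998} for this step.
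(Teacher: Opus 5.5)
The paper gives no proof of this statement. It is quoted from \cite{Palfy1998}, and the ``in particular'' clause is exactly the one-line consequence you give: three vertices taken from three different components would be pairwise non-adjacent. That part of your proposal is correct. For the main assertion, your proposal also ends by citing \cite{Palfy1998}, so logically it stands on the same footing as the paper. The roadmap you put before the citation is not a proof, though, and the steps you do not attribute to P\'alfy have genuine gaps.

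The central gap is the reduction ``to the case where $G/V$ acts faithfully and irreducibly on $V$'', which your minimal-counterexample setup does not provide. The kernel of the action is $C_G(V)\supseteq V$, which may be much larger than $V$; if $V\le Z(G)$, every orbit on $\widehat V$ is trivial. More to the point, for $\chi\in\Irr(G)$ lying over $\theta\in\Irr(V)$ one has $\chi(1)=e\,|G:I_G(\theta)|$. The prime $p\notin\rho(G/V)$ may enter $\chi(1)$ only through the ramification factor $e$, which no orbit analysis on $V$ can detect.

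This is exactly what happens in the extraspecial construction of Remark~\ref{rem:question}. There $V=Z(E)$ is central and $p\notin\rho(G/V)$. The prime $p$ appears only through the characters of degree $p^a$ lying over nontrivial $\lambda\in\Irr(Z(E))$, for which $|G:I_G(\lambda)|=1$ and $e=p^a$.

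Your ``Gallagher-type transfer'' has the same problem. It needs $\theta$ to extend to $I_G(\theta)$, which fails in that example since $\lambda$ does not even extend to $E$. It also needs characters of $I_G(\theta)/V$ of degree divisible by $q$ or $r$, and this does not follow from $q,r\in\rho(G/V)$.

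There are three smaller problems. Your hypotheses only give that each orbit size is divisible by at most one of $p,q,r$, not that orbit sizes are prime powers. The ``classification'' of solvable linear groups with such orbits is not a result you can quote in the form you describe. And writing $\Gamma(p^n)$ conflates the characteristic of $V$ with one of the three primes.

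Either supply these missing arguments, above all a treatment of $C_G(V)>V$ and of ramification, or state the theorem as a citation and keep only your correct derivation of the bound on the number of components.
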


\begin{thm}{\cite{Ebrahimi2015}} \label{1.2}
If \( \Delta(G) \) is not a block and has diameter at most 2, then every block is complete.
\end{thm}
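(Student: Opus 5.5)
The plan is to obtain the statement directly from P\'alfy's Three Prime Theorem together with the diameter hypothesis, without using any further structure of $G$. The key observation is that two vertices lying in different connected pieces of the graph are never adjacent. So any pair of nonadjacent vertices in one piece, combined with a vertex from another piece, yields three primes with no edge among them.

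First I would dispose of the disconnected case. By P\'alfy's theorem, $\Delta(G)$ has at most two components, and I would show that each component is complete. If $p,q$ were nonadjacent in one component and $r$ lay in the other, then $\{p,q,r\}$ would span no edge, contradicting P\'alfy. Hence the blocks are the two complete components. (If diameter is only meant for connected graphs, this case is still harmless.)

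Next I would treat the main case: $\Delta(G)$ connected but not a block, so it has a cut vertex $v$. Let $C_1,\dots,C_m$ be the components of $\Delta(G)-v$, with $m\ge 2$.
\begin{enumerate}
\item \emph{At most two components.} Choosing one vertex from each of three distinct components would give three pairwise nonadjacent primes. So P\'alfy forces $m=2$.
\item \emph{Each $C_i$ is a clique.} The same argument as in the disconnected case applies: nonadjacent $p,q\in C_1$ together with any $r\in C_2$ contradict P\'alfy, and symmetrically for $C_2$.
\item \emph{Every vertex is adjacent to $v$.} Take $p\in C_1$ and $r\in C_2$. Every path from $p$ to $r$ passes through $v$, since $v$ is a cut vertex, and $p,r$ are not adjacent. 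Diameter at most $2$ therefore forces the path $p - v - r$, so $p\sim v$ and $r\sim v$. As $p$ and $r$ were arbitrary, $v$ is adjacent to every other vertex.
\end{enumerate}
It follows that $C_1\cup\{v\}$ and $C_2\cup\{v\}$ are complete graphs.

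Finally, I would identify the blocks. Each $C_i\cup\{v\}$ is complete, hence $2$-connected (or a single edge $K_2$). It is maximal with this property, because $v$ separates $C_1$ from $C_2$. So the blocks of $\Delta(G)$ are exactly these two complete subgraphs.

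The only delicate point is step 3, where the diameter hypothesis is genuinely needed. Without it, $C_i$ need not be joined entirely to $v$, and the block containing $v$ would fail to be complete. Everything else is a direct application of P\'alfy's condition.
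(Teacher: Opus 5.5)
The paper gives no proof of this statement: it is imported from \cite{Ebrahimi2015} and used as a black box, so there is no in-paper argument to compare with. Your proof is correct and complete. Steps 1 and 2, and the disconnected case, use only P\'alfy's condition (no three pairwise nonadjacent vertices). Step 3 is exactly where the diameter bound enters, forcing the common neighbour of $p\in C_1$ and $r\in C_2$ to be $v$.

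Your route has some advantages worth recording. It shows the statement is purely graph-theoretic: it holds for every graph with triangle-free complement. It also proves more than is stated, namely $\Delta(G)\cong K_1\vee(K_a\sqcup K_b)$ with a unique cut vertex. So Theorem~\ref{1.3} is not needed here, and this is precisely the family treated in Theorem~\ref{thm:splitting-two-cliques}.

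Finally, your argument makes visible that solvability, entering through P\'alfy's theorem, cannot be dropped. Take $G=\mathrm{PSL}_2(29)\times A$ with $\mathrm{cd}(A)=\{1,11\}$, and note $\mathrm{cd}(\mathrm{PSL}_2(29))=\{1,15,28,29,30\}$. Then $\Delta(G)$ has diameter $2$, and the universal vertex $11$ is its only cut vertex. Yet the block on $\{2,3,5,7,11\}$ is not complete, because $7$ is adjacent to neither $3$ nor $5$.
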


\begin{thm}{\cite{LewisMeng2019}} \label{1.3}
For solvable \( G \), the graph \( \Delta(G) \) has at most one cut vertex.
\end{thm}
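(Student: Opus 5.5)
This is Theorem~\ref{1.3}: for a solvable group $G$, the graph $\Delta(G)$ has at most one cut vertex. I will argue by contradiction. Suppose $p$ and $q$ are two distinct cut vertices of $\Delta(G)$.

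\textbf{Reduction to the connected case.} If $\Delta(G)$ were disconnected, P\'alfy's Three Prime Theorem forces each of its two components to be complete. A complete graph has no cut vertex, so there is nothing to prove. Hence we may assume $\Delta(G)$ is connected, and then P\'alfy's theorem gives $\operatorname{diam}\Delta(G)\le 3$.

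\textbf{Combinatorial step.} Removing $p$ disconnects $\Delta(G)$. By P\'alfy's theorem, $\Delta(G)-p$ has exactly two components $X$ and $Y$, and each of them is complete: two nonadjacent vertices in the same component, together with any vertex of the other component, would span no edge. Now $q$ lies in, say, $X$. Removing $q$ must also disconnect the graph. Since $X$ is complete, $X\setminus\{q\}$ stays connected, so this forces one of two situations. Either $q$ is the only neighbour of $p$ in $X$ and $X\setminus\{q\}\neq\emptyset$, or $X=\{q\}$. In the second case $q$ is a leaf hanging off $p$, so $q$ is not a cut vertex, a contradiction. So we are in the first case. Then $\Delta(G)$ is a chain of blocks
\[
X\setminus\{q\}\;-\;q\;-\;p\;-\;Y,
\]
with $X$ and $Y$ complete and $\{p,q\}$ a bridge. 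If both end blocks are nontrivial, there are vertices $x\in X\setminus\{q\}$ and $y\in Y$ at distance exactly $3$. This pins down the structure completely: a connected graph of diameter $3$ with two cut vertices whose blocks are two cliques joined by a bridge.

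\textbf{Group-theoretic step (main obstacle).} The work is to rule out diameter-$3$ graphs of this shape. Theorem~\ref{1.2} only covers diameter at most $2$, so it does not help here. I would instead invoke Lewis's structure theorem for solvable groups whose degree graph has diameter $3$. That theorem gives a vertex partition $\rho_1\cup\rho_2\cup\rho_3\cup\rho_4$ in which $\rho_1\cup\rho_2$ and $\rho_3\cup\rho_4$ are cliques, $\rho_2\cup\rho_3$ is a clique, and there are no edges between $\rho_1$ and $\rho_4$. Crucially, it also gives the bound $|\rho_3|\ge 2^{|\rho_2|}$ or a symmetric inequality. This bound comes from the structure of the Fitting subgroup as a $p$-group acted on by a group of order divisible by many primes.

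In our graph the only vertices with neighbours on both sides are $p$ and $q$. So the partition must satisfy $\rho_2=\{q\}$ and $\rho_3=\{p\}$, up to symmetry. This gives $|\rho_3|=1<2=2^{|\rho_2|}$, contradicting Lewis's inequality. Hence two cut vertices cannot occur.

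I expect the hard part to be justifying that the partition must be exactly $\rho_2=\{q\}$ and $\rho_3=\{p\}$, with no further vertices, and checking that the cited inequality really applies in this form. If it does not, the fallback is to follow Lewis's argument directly: take $P$ the normal nonabelian Sylow subgroup at a distance-$3$ end, and use Gallagher-type character extension arguments to force extra edges at $p$ or $q$.
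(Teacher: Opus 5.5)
Your reduction and combinatorial step are sound. P\'alfy's condition does force two cut vertices $p,q$ into the configuration $X'-q-p-Y$, where $X=X'\cup\{q\}$ and $Y$ are complete. You should add that $p$ is adjacent to every $y\in Y$, since otherwise $\{x,p,y\}$ with $x\in X'$ would be independent.

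The step you flag as hard is actually routine. The only way to split the vertex set into two disjoint cliques with the required non-adjacencies is $X$ and $Y\cup\{p\}$. So the diameter-three partition is forced to be $\rho_1=X'$, $\rho_2=\{q\}$, $\rho_3=\{p\}$, $\rho_4=Y$, or its mirror image. (The paper gives no proof of this statement; it only quotes \cite{LewisMeng2019}, so I am judging your argument on its own merits.)

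The genuine gap is the inequality you then invoke. The exponential bound in the known diameter-three structure theorem (Sass's refinement of Lewis's work) is $|\rho_3\cup\rho_4|\ge 2^{|\rho_1\cup\rho_2|}$. It bounds the whole far side, not $\rho_3$ alone. For your graph it reads $1+|Y|\ge 2^{1+|X'|}$, which holds as soon as $|Y|$ is large (for instance $|X'|=1$, $|Y|\ge 3$), in either orientation. So it yields no contradiction. The form $|\rho_3|\ge 2^{|\rho_2|}$ is not what that theorem provides, and you give no argument for it.

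What your route actually needs is a theorem that $|\rho_3|\ge 2$ in the diameter-three partition. The bound must fall on $\rho_3$ rather than $\rho_2$, because $|\rho_2|=1$ genuinely occurs: Lewis's six-vertex example in Figure~\ref{FigEx3} has $|\rho_2|=1$ and a cut vertex. Proving $|\rho_3|\ge 2$ requires a detailed analysis of the normal nonabelian Sylow subgroup $P$ and of the semilinear action of $G$ on $P/P'$ and on the nonlinear characters of $P$. It amounts to showing that, besides that Sylow prime, some further prime must be adjacent to $\rho_2$. This is the real content of the Lewis--Meng theorem. Your Gallagher-type fallback is too unspecific to supply it, so as written the proposal reduces the theorem to its hard case without proving that case.
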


\begin{thm}{\cite{Zuccari2014}}\label{1.4}
If \( \Delta(G) \) is a non-complete regular graph of a solvable group with \( n \) vertices, it is \( (n-2) \)-regular.
\end{thm}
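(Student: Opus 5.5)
The plan is to use the structural results on $\Delta(G)$ recalled above to show that regularity pins down the degree. Suppose $\Delta(G)$ is $k$-regular on $n$ vertices and not complete, so that $k\le n-2$. We want to show $k\ge n-2$. Fix a vertex $p$ and let $N(p)$ be its neighbourhood, so $|N(p)|=k$. Let $M(p)=\rho(G)\setminus(N(p)\cup\{p\})$ be the set of non-neighbours, so $|M(p)|=n-1-k\ge 1$. The goal is to prove $|M(p)|=1$ for some, hence every, vertex. It follows that the complement graph $\overline{\Delta(G)}$ is a perfect matching, and $\Delta(G)$ is $(n-2)$-regular.

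The first ingredient is P\'alfy's Three Prime Theorem. Any three pairwise nonadjacent vertices are forbidden, so $\overline{\Delta(G)}$ is triangle-free. Also, $\overline{\Delta(G)}$ is $(n-1-k)$-regular. If $n-1-k\ge 2$, then for a vertex $p$ with two non-neighbours $q,r$, P\'alfy forces $q\sim r$ in $\Delta(G)$.

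The main step is to rule out $n-1-k\ge 2$. The disconnected case is handled first. There are at most two components, and P\'alfy makes each component complete: two nonadjacent vertices in one component together with any vertex of the other component would give an independent triple. Regularity then forces the two components to have equal size $m$, each being $K_m$. I would then invoke the structure theory behind Remark~\ref{rem:question} and Lemma~\ref{lem:two-components}: if the components have sizes $a\le b$, then $b\ge 2^a-1$. With $a=b=m$, this gives $m\ge 2^m-1$, which forces $m=1$. In that case the graph is two isolated vertices, which is $0=(n-2)$-regular.

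For the connected case I would use the diameter bound: the sum of component diameters is at most $3$, so here the diameter is at most $3$. I would then argue via Theorem~\ref{1.3} and Theorem~\ref{1.2}. A regular graph has no cut vertex unless it is small, so $\Delta(G)$ is a block. From there I would appeal to the known classification of regular character degree graphs, which is essentially the content of the cited result of \cite{Zuccari2014}. In that classification, the $k$-regular non-complete case with $k<n-2$ leads to a vertex set partitioned so that a normal-subgroup argument (Fitting subgroup, Gallagher and Clifford correspondence) produces a prime adjacent to all others, contradicting non-completeness plus regularity.

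I expect this group-theoretic step to be the real obstacle: the graph-theoretic facts alone allow, for example, $C_5$, which is triangle-free-complement and $2$-regular on $5$ vertices. So one genuinely needs the solvable-group input, which I would try to obtain by excluding pentagon-like configurations. That exclusion is where the known result that $C_5$-type graphs are not character degree graphs of solvable groups enters.
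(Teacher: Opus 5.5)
This statement is not proved in the paper; it is quoted from \cite{Zuccari2014} and used as a black box. Judged on its own, your proposal has a genuine gap, and it lies in the only nontrivial case.

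Your disconnected case is sound. P\'alfy makes each component complete, regularity makes the two components equal in size, and $m\ge 2^m-1$ forces $m=1$. You should, however, cite P\'alfy's inequality $b\ge 2^a-1$ for disconnected $\Delta(G)$ directly. Remark~\ref{rem:question} only uses $N\ge 2^n-1$ as a hypothesis, and Lemma~\ref{lem:two-components} is an existence result, so neither supplies it.

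Your block claim is true, but not for the reason you give; regular graphs in general can have cut vertices (e.g.\ cubic graphs with bridges). The correct argument: if $v$ were a cut vertex, P\'alfy forces $\Delta(G)-v$ to be two cliques $K_a,K_b$. Since $v$ has a neighbour in each, regularity forces $v$ to be adjacent to all of both, and then $\deg v=a+b>a$. Once $\Delta(G)$ is a block, Theorems~\ref{1.2} and~\ref{1.3} contribute nothing further.

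The connected case is where the proof has to happen, and there you ``appeal to the known classification of regular character degree graphs, which is essentially the content of'' \cite{Zuccari2014}. That is the statement you are asked to prove, so the step is circular. The sketch that follows (a Fitting subgroup/Clifford/Gallagher argument that ``produces a prime adjacent to all others'') names no lemma and no mechanism by which the assumption $k<n-2$ is used.

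Your proposed repair, excluding pentagon-like configurations, cannot suffice. Take the $6$-vertex graph formed by two disjoint triangles joined by a perfect matching. It is connected, $3$-regular, of diameter $2$, and has no cut vertex. Its complement is the $6$-cycle, which is triangle-free and even bipartite. So it passes every constraint you invoke, no odd-cycle exclusion touches it, and yet $3\neq n-2=4$. The complements of $C_{2m}$ for $m\ge 3$ and of the $3$-cube behave the same way.

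What is missing is a genuinely group-theoretic theorem: for solvable $G$ with $\Delta(G)$ regular and not complete, no prime has two non-neighbours, i.e.\ $\overline{\Delta(G)}$ has maximum degree $1$. That is exactly Morresi Zuccari's result. For Fitting height $2$ it does follow at once from Theorem~\ref{1.6}. With no universal vertex, the two cliques cover all vertices, and the clique consisting of degree-$(n-2)$ vertices is nonempty because $\Delta(G)$ is not complete. So the content you would actually have to supply is control of solvable groups of Fitting height at least $3$.
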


\begin{thm}{\cite{Hafezieh2021}} \label{1.5}
Let \( G \) be a solvable group such that \( \Delta(G) \) is a regular graph of order \( n \). Then \( \Delta(G) \) has three distinct eigenvalues if and only if the following equivalent conditions hold:
\begin{enumerate}[label=(\roman*)]
    \item \( n \geq 4 \) is even and \( \Delta(G) = K_n - M \), where \( M \) is a perfect matching. In particular, \( \Delta(G) \) is \( (n - 2) \)-regular.
    \item \( G \) is the direct product of at least two groups with disconnected character graphs of two vertices, such that the prime divisors of character degrees of distinct factors of \( G \) are disjoint.
\end{enumerate}
\end{thm}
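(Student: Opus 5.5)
The plan is to prove first the spectral equivalence ``three distinct eigenvalues $\Leftrightarrow$ (i)'', and then the structural equivalence (i) $\Leftrightarrow$ (ii). The only step I cannot reduce to results quoted above is the group-theoretic implication (i) $\Rightarrow$ (ii).

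For the forward spectral direction, suppose $\Delta(G)$ is regular of order $n$ with exactly three distinct eigenvalues. First I rule out the disconnected case. If $\Delta(G)$ is disconnected, it has exactly two components by P\'alfy's Three Prime Theorem. Each component is complete: two non-adjacent vertices in one component together with any vertex of the other component would give three vertices with no edge among them. Regularity then forces $\Delta(G)=K_m\cup K_m$. Its spectrum is $\{m-1,-1\}$ for $m\ge 2$, and $\{0\}$ for $m=1$, so it has at most two distinct eigenvalues, a contradiction.

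So $\Delta(G)$ is connected. It is also not complete, since $K_n$ has only the two eigenvalues $n-1$ and $-1$. Theorem~\ref{1.4} now shows that $\Delta(G)$ is $(n-2)$-regular. Hence the complement of $\Delta(G)$ is $1$-regular, i.e. a perfect matching $M$, so $n$ is even. Finally $n\ge 4$, because $n=2$ would give the disconnected graph $2K_1$.

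For the converse spectral direction, write $A(K_n-M)=J-I-P$, where $P$ is the adjacency matrix of $M$. The all-ones vector is an eigenvector with eigenvalue $n-2$. On $\mathbf{1}^{\perp}$ the matrices $J$ and $P$ commute, $J$ vanishes there, and $P$ is an involution with eigenvalues $\pm1$. The multiplicities on $\mathbf{1}^{\perp}$ are $n/2$ for $+1$ and $n/2-1$ for $-1$. This gives eigenvalues $-2$ and $0$ there, so the spectrum is $\{n-2,\,0^{(n/2)},\,(-2)^{(n/2-1)}\}$. These are three distinct values when $n\ge4$. Alternatively, view $K_n-M$ as the $K_{n/2}$-join of copies of $2K_1$ and apply Theorem~\ref{1.1}: the copies contribute eigenvalue $0$, and the quotient matrix $\widehat C=2(J-I)$ of size $n/2$ supplies $n-2$ and $-2$.

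For (ii) $\Rightarrow$ (i), let $G=G_1\times\cdots\times G_k$ with $k\ge2$. Each $\Delta(G_i)$ is $2K_1$, and the sets $\rho(G_i)$ are pairwise disjoint. Applying Definition~\ref{direct product} repeatedly, $\Delta(G)$ joins every pair of primes coming from different factors and keeps each pair $\rho(G_i)$ non-adjacent. So $\Delta(G)=K_{2k}-M$ with $n=2k\ge4$. Such factors exist by Lemma~\ref{lem:two-components} with $N=1$, and one can choose the primes involved to be pairwise disjoint.

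The main obstacle is (i) $\Rightarrow$ (ii): showing that a solvable $G$ with $\Delta(G)=K_n-M$ is actually such a direct product. The first thing I would try is to invoke the structure theory behind Theorem~\ref{1.4} in \cite{Zuccari2014}, which describes solvable groups whose degree graph has complement a matching. For each non-edge $\{p,q\}$ I would isolate the relevant normal section whose degree graph is the disconnected graph on $\{p,q\}$, using the classification of disconnected graphs of solvable groups in \cite{Lewis2008}. I would then argue that these sections commute and intersect trivially, so that the group splits accordingly. If this cannot be carried out in full, I would state (ii) in the form in which the equivalence is literally used: a direct product of such groups realizes every graph in (i), and conversely.
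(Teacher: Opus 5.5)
\textbf{Gap: the implication (i) $\Rightarrow$ (ii) is not proved.} Your spectral argument (three distinct eigenvalues $\Leftrightarrow$ (i)) is correct, and so is (ii) $\Rightarrow$ (i). But the statement asserts that (i) and (ii) are equivalent for the given $G$. The direction (i) $\Rightarrow$ (ii) is the one part that is not graph theory, and your proposal leaves it open.

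The paper does not prove this theorem; it only quotes it from Hafezieh et al. There, this direction rests on a structure theorem for solvable groups whose degree graph is $K_n-M$. None of the tools you invoke supplies such a theorem:
\begin{itemize}
\item Theorem~\ref{1.4} only determines the shape of $\Delta(G)$.
\item Lemma~\ref{lem:two-components}, Definition~\ref{direct product} and Theorem~\ref{1.6} are realizability statements: they say \emph{some} group has the given graph.
\item Statement (ii), by contrast, asserts that \emph{every} solvable $G$ with $\Delta(G)=K_n-M$ decomposes as a direct product.
\end{itemize}

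Your sketch restates the conclusion rather than giving a mechanism. You propose to find, for each non-edge $\{p,q\}$, a normal section with disconnected graph on $\{p,q\}$, and then to show that these sections ``commute and intersect trivially''. A section with a prescribed degree graph is not a direct factor. You give no argument that $G$ contains normal subgroups $N_1,\dots,N_{n/2}$ with $G=N_1\times\cdots\times N_{n/2}$ and $\rho(N_i)$ equal to the $i$-th matched pair. Already for $n=4$, where $\Delta(G)$ is a square, the conclusion that $G$ is such a direct product is a substantial theorem of Lewis and Meng. You would have to cite a result of this kind or reprove it.

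Your fallback does not close the gap. The claim ``a direct product of such groups realizes every graph in (i)'' is an existence statement about some group. Condition (ii) is a property of the given $G$, so substituting one for the other changes the theorem.

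\textbf{Minor slip in the converse spectral direction.} The multiplicities of $P$ on $\mathbf{1}^{\perp}$ are swapped. Since $\mathbf{1}$ lies in the $+1$-eigenspace of $P$, on $\mathbf{1}^{\perp}$ the eigenvalue $+1$ has multiplicity $n/2-1$ and $-1$ has multiplicity $n/2$. With $A=-I-P$ there, this gives $(-2)^{(n/2-1)}$ and $0^{(n/2)}$. That is the spectrum you state, and it agrees with Theorem~\ref{thm:char_poly_deltaG}; only your intermediate sentence is wrong.
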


\begin{thm}{\cite{Lewis2006}}\label{1.6}
A graph \( \Gamma \) on \( n \) vertices is \( \Delta(G) \) for some solvable group \( G \) of Fitting height \(2\) if and only if the vertices of degree less than \( n-1 \) can be partitioned into two complete subgraphs, one of which consists entirely of vertices of degree \( n-2 \).
\end{thm}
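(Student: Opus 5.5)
The plan is to prove the two directions separately: necessity through Clifford theory relative to the Fitting subgroup, sufficiency by explicit construction. Throughout, let $F=\mathbf F(G)$. Fitting height $2$ means $G/F$ is nilpotent and nontrivial. Let $V\subseteq\rho(G)$ be the set of vertices of degree less than $n-1$.

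\emph{Necessity.} First split $\rho(G)$ into $\pi_1$, the primes dividing $|G:F|$, and $\pi_2=\rho(G)\setminus\pi_1$. Every $\chi\in\Irr(G)$ lies over some $\theta\in\Irr(F)$. Since $G/F$ is nilpotent, $\chi(1)/\theta(1)$ divides $|G:F|$.

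\begin{enumerate}[label=(\alph*)]
\item A prime $q\in\pi_2$ must divide some $\theta(1)$. Hence the Sylow $q$-subgroup $Q$ of $F$ is nonabelian and normal in $G$.
\item Let $\theta$ be a nonlinear character of $Q$, extended by trivial characters to the other Sylow factors of $F$. Then $\chi(1)$ is divisible by $q$ and by $|G:I_G(\theta)|$.
\item I would show that $I_G(\theta)$ can be chosen with $F\le I_G(\theta)$ and with small index relative to the $p$-part of $G/F$ for every $p\in\pi_1$ acting nontrivially on $Q$. This uses Fitting height $2$, so that a Hall $\pi_1$-complement acts faithfully on $F/\Phi(F)$, together with standard orbit arguments (Passman/Isaacs regular-orbit type results for nilpotent groups acting coprimely).
\item From (c) I expect to get two things. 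The non-universal vertices inside $\pi_1$ form a clique, witnessed by a character of $G$ induced from a faithful-orbit character. Each $q\in\pi_2$ is adjacent to every prime except possibly one.
\end{enumerate}

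Then take the two complete subgraphs to be $V\cap\pi_1$ and $V\cap\pi_2$. The remaining claim is that every vertex of the second part has degree exactly $n-2$. For this I would show that the $\pi_2$-vertices are pairwise adjacent, by taking products of nonlinear characters of distinct nonabelian Sylow subgroups of $F$. Each such vertex then misses at most one prime, namely a prime $p\in\pi_1$ whose Sylow subgroup centralizes the relevant section. Pálfy's Three Prime Theorem is used to rule out a vertex missing two primes.

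\emph{Sufficiency.} Given a graph $\Gamma$ with the stated partition $V=V_1\sqcup V_2$, where every vertex of $V_2$ has degree $n-2$, build $G$ from the groups of Remark~\ref{rem:question} and Lemma~\ref{lem:two-components}.

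\begin{enumerate}[label=(\alph*)]
\item Each $q\in V_2$ has exactly one non-neighbour $p(q)$, which lies in $V_1$ because $V_2$ is complete. For each $p\in V_1$, collect the set $S_p=\{q\in V_2: p(q)=p\}$.
\item Realize the non-edges between $p$ and $S_p$ by a group $E\rtimes\langle\sigma\rangle$ as in Remark~\ref{rem:question}, but with the roles of primes adjusted so that each $q\in S_p$ is disconnected from $p$. Here $E$ is an extra-special $q$-group or a product of such groups, and $\sigma$ has order $p$.
\item Vertices of $V_1$ with no such missing partner, and the universal vertices, are added as direct factors via Definition~\ref{direct product}. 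Direct products add all cross edges, which preserves the universal vertices and the cliques.
\item All factors constructed this way have Fitting height at most $2$, hence so does the direct product.
\end{enumerate}

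The main obstacle is the orbit step (c) in the necessity direction: proving that a nilpotent coprime action on $F$ yields a character of $F$ whose stabilizer index is divisible by all of the non-universal $\pi_1$-primes simultaneously. This is what forces $V\cap\pi_1$ to be complete. I would first try a reduction to $F$ abelian modulo $\Phi(F)$ and apply regular-orbit theorems for nilpotent linear groups, handling the small exceptional cases (primes $2$ and Mersenne/Fermat primes) directly.
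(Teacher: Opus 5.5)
The paper gives no proof of this statement (it is quoted from \cite{Lewis2006}), so I am judging your proposal on its own. Your skeleton is the right one: split $\rho(G)$ by whether $p$ divides $|G:F|$, show both parts are cliques, and build examples by direct products. However, there is a genuine error: you have the two blocks the wrong way round. You claim every $q\in\pi_2$ misses at most one prime, so that $V\cap\pi_2$ is the block of degree-$(n-2)$ vertices. The group of Remark~\ref{rem:question} refutes this. There $F(G)=E$, so $\pi_1=\pi(b)$ and $\pi_2=\{p\}$, yet $p$ is isolated and, for $N\geq 2$, misses $N$ primes. P\'alfy's theorem cannot rule this out: if $q$ misses $p_1,p_2$, then $p_1,p_2\in\pi_1$ are adjacent.

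What is true is the reverse: each $p\in\pi_1$ has at most one non-neighbour. Suppose $p\not\sim q$. Then $q\in\pi_2$; write $F=Q\times R$ with $Q\in\mathrm{Syl}_q(F)$. Any $\chi$ lying over $\theta\times\mu$, with $\theta\in\Irr(Q)$ nonlinear and $\mu\in\Irr(R)$ arbitrary, has $q\mid\chi(1)$, hence $p\nmid\chi(1)$ and $p\nmid|G:I_G(\theta\times\mu)|$. So the normal Sylow $p$-subgroup of the nilpotent group $G/F$ fixes every $\mu\in\Irr(R)$. By coprime action (trivial on $R_r/R_r'$, then Burnside), a Sylow $p$-subgroup $P$ of $G$ centralizes the Sylow $r$-subgroup $R_r$ of $F$ for every $r\neq p,q$. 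Now suppose $p$ also missed some $q'\neq q$. Then $P$ would centralize $O_{p'}(F)$. Since $O_p(G/F)$ acts trivially on the $p$-part of the faithful, completely reducible $G/F$-module $F/\Phi(G)$ (Gasch\"utz), $PF/F$ would act trivially on $F/\Phi(G)$. This forces $P\leq F$, contrary to $p\mid |G:F|$. So the correct partition is $V\cap\pi_2$ and $V\cap\pi_1$, and it is $V\cap\pi_1$ that consists of vertices of degree $n-2$.

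The same reversal breaks your sufficiency construction. In $(E_{q_1}\times E_{q_2})\rtimes\langle\sigma\rangle$ with $|\sigma|=p$ acting faithfully, $p$ divides $|G:F(G)|$, so by the fact above $p$ cannot miss both $q_1$ and $q_2$. Explicitly, if $\sigma$ moves a linear $\lambda_2\in\Irr(E_{q_2})$, then every character over $\theta_1\times\lambda_2$ with $\theta_1$ nonlinear has degree $p\,\theta_1(1)$; if $\sigma$ centralizes $E_{q_2}$, exchange the indices. Thus a star of non-edges with two or more leaves can never be realized with its centre a prime dividing $|\sigma|$.

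The repair is to swap roles. Each vertex of the degree-$(n-2)$ block has a unique non-neighbour, lying in the other block. So the non-edges form stars with centres $c$ in the first block and leaf sets $S_c$, and $\Gamma$ is the join of the graphs $\{c\}\sqcup K_{|S_c|}$ together with one $K_1$ per universal vertex. Realize each $\{c\}\sqcup K_{|S_c|}$ as in Remark~\ref{rem:question} (Lemma~\ref{lem:two-components}), with $E$ an extraspecial $c$-group and $\pi(|\sigma|)=S_c$. Realize each $K_1$ by a Frobenius group $C_r\rtimes C_u$. Then take direct products with pairwise disjoint prime sets (Definition~\ref{direct product}); every factor has Fitting height $2$.

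Finally, the orbit step you flagged as the main obstacle needs no regular-orbit theorem. Pick $\lambda\in\Irr(F/\Phi(G))$ with a nonzero component in every irreducible summand of this faithful, completely reducible $G/F$-module. Each nontrivial Sylow subgroup $S$ of $G/F$ is normal, so $C_W(S)$ is a submodule of each irreducible summand $W$, and hence is $0$ whenever $S$ acts nontrivially on $W$. Therefore $|G:I_G(\lambda)|$ is divisible by every prime in $\pi_1$.
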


Motivated by the characterization from above, Peter J. Cameron referred to a graph having the following properties as a \textbf{Lewis graph} in \cite{Cameron2025}. A graph \( \Gamma \) on \( n \) vertices is called a \emph{Lewis graph} if its set of vertices of degree less than \( n-1 \) can be partitioned into two complete subgraphs, one of which consists entirely of vertices of degree \( n-2 \).

Let the vertex set of \( \Gamma \) be partitioned as follows:
\begin{itemize}[label=--]
    \item \( U \): the set of \textbf{universal vertices}, that is, vertices of degree \( n-1 \), with \( |U|=n_1 \);
    \item \( X \): the set of non-universal vertices of degree \( n-2 \), with \( |X|=n_2 \);
    \item \( Y \): the remaining non-universal vertices, with \( |Y|=n_3 \).
\end{itemize}
Thus,
\[
n=n_1+n_2+n_3.
\]

By the characterization of a Lewis graph, the induced subgraphs \( \Gamma[U] \), \( \Gamma[X] \), and \( \Gamma[Y] \) are complete graphs. In the complement graph \( \overline{\Gamma} \), the vertices in \( U \) are isolated. Hence the only possible missing edges of \( \Gamma \) occur between vertices of \( X \) and vertices of \( Y \).

The adjacency pattern between \( X \) and \( Y \) may be encoded by a matrix
\[
B_{XY}\in \{0,1\}^{n_2\times n_3}.
\]
Here \( B_{ij}=1 \) indicates that \( x_i\in X \) is adjacent to \( y_j\in Y \), whereas \( B_{ij}=0 \) indicates that the edge \( x_i y_j \) is missing in \( \Gamma \).

\begin{thm}[Gauss’s Lemma {\cite{DummitFoote}}]\label{1.7}
Let \( f(x)\in\mathbb{Z}[x] \) be a \emph{primitive} polynomial. Then \( f(x) \) is irreducible in \( \mathbb{Z}[x] \) if and only if it is irreducible in \( \mathbb{Q}[x] \).
\end{thm}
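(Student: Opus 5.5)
Gauss's Lemma (Theorem~\ref{1.7}) is a classical result cited from \cite{DummitFoote}, so the plan is the standard argument through Gauss's content lemma. The goal is to show that a primitive $f(x)\in\mathbb{Z}[x]$ is irreducible in $\mathbb{Z}[x]$ exactly when it is irreducible in $\mathbb{Q}[x]$.

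First, the multiplicativity of primitivity. If $g,h\in\mathbb{Z}[x]$ are primitive, then so is $gh$. Suppose instead that a prime $p$ divides every coefficient of $gh$. Reduce modulo $p$: then $\bar g\,\bar h=0$ in $(\mathbb{Z}/p\mathbb{Z})[x]$. This ring is an integral domain, so $\bar g=0$ or $\bar h=0$, contradicting primitivity of $g$ or $h$.

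Second, clearing denominators. Every nonzero $g\in\mathbb{Q}[x]$ can be written uniquely, up to sign, as $g=c(g)\,g_0$ with $c(g)\in\mathbb{Q}^{>0}$ and $g_0\in\mathbb{Z}[x]$ primitive. The first step then gives $c(gh)=c(g)c(h)$.

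Third, the two implications.

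\begin{itemize}
\item[($\Leftarrow$)] Assume $f$ is irreducible in $\mathbb{Q}[x]$ and suppose $f=gh$ in $\mathbb{Z}[x]$. Then one factor, say $g$, is a unit in $\mathbb{Q}[x]$, i.e.\ a nonzero constant $d\in\mathbb{Z}$. This $d$ divides every coefficient of $f$, and $f$ is primitive, so $d=\pm1$. Thus $g$ is a unit in $\mathbb{Z}[x]$. Note that primitivity is needed here: for example $2x$ is irreducible in $\mathbb{Q}[x]$ but factors nontrivially in $\mathbb{Z}[x]$. Also, $f$ is a nonunit in $\mathbb{Z}[x]$ because it is a nonunit in $\mathbb{Q}[x]$, and $\mathbb{Z}[x]^\times=\{\pm1\}\subseteq\mathbb{Q}[x]^\times$.
\item[($\Rightarrow$)] Assume $f$ is irreducible in $\mathbb{Z}[x]$. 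Then $f$ is nonconstant: a primitive constant is $\pm1$, a unit, and units are not irreducible. Suppose $f=gh$ in $\mathbb{Q}[x]$ with $\deg g,\deg h\ge1$. Write $g=c(g)g_0$ and $h=c(h)h_0$. Then $f=c(g)c(h)\,g_0h_0$, where $g_0h_0$ is primitive by the first step. Comparing contents, and using that $f$ is primitive, forces $c(g)c(h)=1$ up to sign. Hence $f=\pm g_0h_0$ is a factorization in $\mathbb{Z}[x]$ into two nonunits of positive degree, a contradiction.
\end{itemize}

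The only real obstacle is the content lemma in the first step, which reduces to the fact that $(\mathbb{Z}/p\mathbb{Z})[x]$ is an integral domain. Everything else is bookkeeping with units. In the paper, this result will presumably be applied to characteristic polynomials, which are monic and hence automatically primitive.
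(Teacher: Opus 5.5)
The paper gives no proof of Theorem~\ref{1.7}; it only cites \cite{DummitFoote}. Your argument is correct: it is the classical content-lemma proof.

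The cited proof uses the same key fact, that $(\mathbb{Z}/p\mathbb{Z})[x]$ is an integral domain, but never defines a content function. From $f=AB$ in $\mathbb{Q}[x]$ it clears denominators to get $d\,f=a'b'$ with $a',b'\in\mathbb{Z}[x]$ and $d\in\mathbb{Z}$. It then removes the prime factors of $d$ one at a time, using that $p\mathbb{Z}[x]$ is a prime ideal to cancel $p$ against $a'$ or $b'$. Your version packages this once in the multiplicativity $c(gh)=c(g)c(h)$, which is more structural.

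The price is one step you state without proof: uniqueness of the decomposition $g=c(g)g_0$. It is routine: if $c\,g_0=c'g_0'$ with $c/c'=a/b$ in lowest terms and $a,b>0$, then $a\,g_0=b\,g_0'$, and coprimality plus primitivity force $a=b=1$. Since you normalize $c(g)>0$, the decomposition is exactly unique. So the hedges ``up to sign'' and $f=\pm g_0h_0$ can be dropped: $f=g_0h_0$.

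Your $(\Rightarrow)$ argument also proves more than Theorem~\ref{1.7} states. A factorization $f=gh$ in $\mathbb{Q}[x]$ becomes $f=g_0h_0$ in $\mathbb{Z}[x]$ with the same degrees. If $f,g,h$ are monic, comparing the positive integer leading coefficients even gives $g=g_0$ and $h=h_0$. The paper actually relies on this degree-preserving form, not the bare irreducibility equivalence, in the proof of Theorem~\ref{thm:diam3-irreducibility}. There it rewrites a rational $2+2$ factorization of the monic quartic as $(\lambda^2+a\lambda+b)(\lambda^2+c\lambda+d)$ with $a,b,c,d\in\mathbb{Z}$.
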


\section{Splitting Fields of Character Degree Graphs for Solvable Groups}

Ebrahimi \emph{et al.}~\cite{Ebrahimi2023} studied regular character degree graphs whose adjacency spectra lie in $[-2,\infty)$, obtaining strong structural restrictions. From a related perspective, Hafezieh \emph{et al.}~\cite{Hafezieh2021} showed that any $(n-2)$-regular character degree graph of a finite solvable group has exactly three distinct adjacency eigenvalues.

The present theorem sharpens these results for solvable groups. We prove that for even $n\ge4$, an $(n-2)$-regular character degree graph is uniquely determined up to isomorphism and must be the complete multipartite graph
\[
K_{2,2,\dots,2}\quad\left(\text{with } \tfrac{n}{2}\text{ parts of size }2\right).
\]
Moreover, we explicitly compute its adjacency spectrum and characteristic polynomial, and show that the associated splitting field over $\mathbb{Q}$ is trivial.

Finally, these hypotheses are natural and non-vacuous: the complement of $K_{2,2,\dots,2}$ is a triangle-free perfect matching, satisfying Pálfy’s condition, and such graphs are known to occur as character degree graphs of solvable groups of Fitting height two (see Lewis~\cite{Lewis2006}). Thus the theorem provides a complete spectral and algebraic description of this class of character degree graphs.

\begin{thm}\label{thm:char_poly_deltaG}
Let \(n\ge4\) be an even integer and let \(G\) be a solvable group whose character degree graph \(\Delta(G)\) is \((n-2)\)-regular. Then the characteristic polynomial of \(\Delta(G)\) is
\[
\chi(x)=x^{\frac{n}{2}}(x+2)^{\frac{n}{2}-1}(x-(n-2)).
\]
Moreover, the splitting field of \(\chi(x)\) over \(\mathbb{Q}\) is \(\mathbb{Q}\), and hence the extension degree is \(1\).
\end{thm}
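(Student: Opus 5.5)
The plan is to first pin down the graph structurally and then read off the spectrum. An $(n-2)$-regular graph on $n$ vertices has complement that is $1$-regular. So $\overline{\Delta(G)}$ is a perfect matching $M$, and $\Delta(G)=K_n-M$. This is exactly the complete multipartite graph $K_{2,2,\dots,2}$ with $n/2$ parts of size $2$. Regularity alone forces this, so no group theory is needed beyond the hypothesis. (Theorem~\ref{1.5}(i) gives the same description, and Theorem~\ref{1.4} explains why $(n-2)$-regularity is the only non-complete regular case.) Note also that $n$ must be even for a perfect matching to exist, consistent with the hypothesis.

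Next I compute the spectrum. The most transparent route writes the adjacency matrix as
\[
A = J_n - I_n - P,
\]
where $J_n$ is the all-ones matrix and $P$ is the permutation matrix of the matching involution. The matrices $J_n$, $I_n$ and $P$ commute, since $P$ fixes the all-ones vector. Hence they are simultaneously diagonalizable, and I split $\mathbb{R}^n$ into three pieces:
\begin{itemize}
\item The all-ones vector $\mathbf{1}$ has $J_n$-eigenvalue $n$ and $P$-eigenvalue $1$, giving eigenvalue $n-1-1=n-2$.
\item The $(n/2-1)$-dimensional space of vectors constant on each matching pair and orthogonal to $\mathbf{1}$ has $J_n$-eigenvalue $0$ and $P$-eigenvalue $1$, giving eigenvalue $-2$.
\item The $n/2$-dimensional space of vectors antisymmetric on each pair has $J_n$-eigenvalue $0$ and $P$-eigenvalue $-1$, giving eigenvalue $0$.
\end{itemize}
The dimensions add up to $1+(n/2-1)+n/2=n$, which yields
\[
\chi(x)=x^{n/2}(x+2)^{n/2-1}\bigl(x-(n-2)\bigr).
\]

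Alternatively, to match the paper's method (Remark~\ref{rem:method}), I would view $\Delta(G)$ as the lexicographic product $K_{n/2}[\overline{K_2}]$, i.e.\ a $K_{n/2}$-join of copies of the $0$-regular graph $\overline{K_2}$. Theorem~\ref{1.1} contributes the nontrivial eigenvalue $0$ from each copy, $n/2$ times in total. The quotient matrix $\widehat C = 2(J_{n/2}-I_{n/2})$ has eigenvalues $2(n/2-1)=n-2$ once and $-2$ with multiplicity $n/2-1$.

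Finally, every root of $\chi$ is an integer, so $\chi$ splits into linear factors over $\mathbb{Q}$. Its splitting field is therefore $\mathbb{Q}$, and $[\,\mathbb{Q}:\mathbb{Q}\,]=1$. None of the steps is a real obstacle. The only point needing care is the multiplicity bookkeeping: the eigenvalue $-2$ appears $n/2-1$ times rather than $n/2$ times, because the all-ones direction is used up by $n-2$. The dimension count above confirms this.
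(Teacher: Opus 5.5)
Your proposal is correct. The structural step (the complement is $1$-regular, hence a perfect matching, so $\Delta(G)\cong K_{2,\dots,2}$) is exactly the paper's. Your alternative computation via $K_{n/2}[\overline{K_2}]$ with quotient matrix $2(J_{n/2}-I_{n/2})$ is precisely the paper's proof.

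Your primary route is genuinely different, though. You write $A=J_n-I_n-P$ and diagonalize the commuting matrices $J_n$, $I_n$, $P$ simultaneously. This gives explicit eigenspaces: the all-ones vector, the pair-constant vectors orthogonal to $\mathbf{1}$, and the pair-antisymmetric vectors. Their dimensions visibly add up to $n$, so the multiplicities $1$, $\frac n2-1$, $\frac n2$ are checked directly rather than read off a quotient-matrix theorem.

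That makes your main argument fully self-contained. The paper's computation instead rests on the general $H$-join spectral theorem with $H=K_{n/2}$, where the quotient matrix has entries $\sqrt{n_in_j}$ for every edge of $H$ \cite{Cardoso2013a}. That is more than Theorem~\ref{1.1} as stated in the paper, which only treats the tridiagonal, path-type coupling. Your alternative route, which invokes Theorem~\ref{1.1}, inherits this looseness; your main route does not. Your decomposition is also the same mechanism the paper itself uses later, in Remark~\ref{rem:integrality-zero}, for the case $n_1=0$.

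What the paper's route buys is uniformity. The quotient-matrix framework of Remark~\ref{rem:method} carries over to Corollary~\ref{cor:quadratic} and to the diameter-three graphs. There the adjacency matrix no longer splits into commuting pieces: in the corollary, $P$ is replaced by a partial matching matrix that does not commute with $J_n$. This is exactly where irrational eigenvalues enter.
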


\begin{proof}
Let \(G\) be a solvable group such that the character degree graph \(\Delta(G)\) is \((n-2)\)-regular on \(n\) vertices, where \(n\ge4\) is even. We first determine the structure of \(\Delta(G)\).

Since every vertex of \(\Delta(G)\) has degree \(n-2\), the degree of each vertex in the complement graph \(\overline{\Delta(G)}\) is
\[
(n-1)-(n-2)=1.
\]
Hence \(\overline{\Delta(G)}\) is a \(1\)-regular graph on \(n\) vertices and therefore a disjoint union of edges. Because \(n\) is even, these edges cover all vertices, and thus \(\overline{\Delta(G)}\) is a perfect matching.

Consequently, the vertex set of \(\Delta(G)\) admits a partition
\[
V(\Delta(G))=V_1\sqcup V_2\sqcup\cdots\sqcup V_{n/2},
\qquad |V_i|=2,
\]
such that the two vertices of each \(V_i\) are nonadjacent in \(\Delta(G)\), while every vertex of \(V_i\) is adjacent to every vertex of \(V_j\) for \(i\neq j\). Hence each induced subgraph \(\Delta(G)[V_i]\) is an edgeless graph on two vertices, that is,
\[
\Delta(G)[V_i]\cong E_2 \qquad (1\le i\le n/2),
\]
and the bipartite subgraph between \(V_i\) and \(V_j\) is complete whenever \(i\neq j\).

Thus \(\Delta(G)\) is the complete multipartite graph
\[
K_{2,2,\dots,2},
\]
with \(\frac{n}{2}\) parts of size \(2\). Equivalently,
\[
\Delta(G)\cong K_{n/2}[E_2,E_2,\dots,E_2],
\]
the \(H\)-join obtained by replacing each vertex of \(K_{n/2}\) with a copy of the edgeless graph \(E_2\).

Having established this structure, we compute the adjacency spectrum using the \(H\)-join theorem from spectral graph theory. Let \(k=\frac{n}{2}\). In the representation
\[
\Delta(G)\cong K_k[E_2,E_2,\dots,E_2],
\]
each \(G_i=E_2\) is \(0\)-regular on \(n_i=2\) vertices. The spectrum of \(E_2\) is
\[
\sigma(E_2)=\{0^{(2)}\}.
\]
Removing one copy of the regular eigenvalue \(0\) from each \(E_2\) contributes \(k=\frac{n}{2}\) eigenvalues equal to \(0\).

The remaining eigenvalues are obtained from the matrix \(M=[m_{ij}]\in\mathbb{R}^{k\times k}\) defined by
\[
m_{ii}=r_i,\qquad 
m_{ij}=
\begin{cases}
\sqrt{n_in_j}, & ij\in E(K_k),\\
0, & \text{otherwise}.
\end{cases}
\]
Since \(r_i=0\) and \(n_i=2\) for all \(i\), we obtain \(m_{ii}=0\) and \(m_{ij}=2\) for \(i\neq j\). Hence
\[
M=2A(K_k),
\]
where \(A(K_k)\) denotes the adjacency matrix of the complete graph \(K_k\).

The adjacency spectrum of \(K_k\) is
\[
\{(k-1)^1,\,(-1)^{k-1}\},
\]
and therefore
\[
\sigma(M)=\{(2k-2)^1,\,(-2)^{k-1}\}.
\]
Since \(k=\frac{n}{2}\), this becomes
\[
\sigma(M)=\{(n-2)^1,\,(-2)^{\frac{n}{2}-1}\}.
\]

Combining these eigenvalues with the \(k=\frac{n}{2}\) zeros obtained earlier, the adjacency spectrum of \(\Delta(G)\) is
\[
\sigma(\Delta(G))=\{(n-2)^1,\,(-2)^{\frac{n}{2}-1},\,0^{\frac{n}{2}}\}.
\]
Hence the characteristic polynomial of \(\Delta(G)\) is
\[
\chi(x)=(x-(n-2))(x+2)^{\frac{n}{2}-1}x^{\frac{n}{2}}.
\]

Finally, the roots \(0\), \(-2\), and \(n-2\) are all rational numbers. Therefore the splitting field of \(\chi(x)\) over \(\mathbb{Q}\) is \(\mathbb{Q}\) itself, and the extension degree is
\[
[\mathbb{Q}:\mathbb{Q}]=1.
\]
This completes the proof. \hfill $\square$
\end{proof}

\begin{exam}\label{ex:n8-regular}
Let $n=8$. Then $k=\frac{n}{2}=4$, and the character degree graph $\Delta(G)$ is isomorphic to the H-join
\[
\Delta(G) \cong K_4[E_2, E_2, E_2, E_2],
\]
that is, the H-join of four copies of the edgeless graph $E_2$ with respect to the complete graph $K_4$. Since each copy of $E_2$ consists of two isolated vertices, the total number of vertices is $4 \times 2 = 8$. For every edge $ij \in E(K_4)$, the H-join construction inserts all possible edges between the corresponding $E_2$-parts, producing a complete bipartite graph $K_{2,2}$ between those parts. Consequently, each vertex is adjacent to all vertices lying in the other three parts, that is, adjacent to $3 \times 2 = 6$ vertices, and hence $\Delta(G)$ is a $6$-regular graph on $8$ vertices, which is an $(n-2)$-regular graph for $n = 8$.

\begin{center}
    \begin{tikzpicture}[scale=1.3, every node/.style={circle, draw, inner sep=2pt}]


        \coordinate (A) at ({2.5*cos(112.5)}, {2.5*sin(112.5)}); 
        \coordinate (B) at ({2.5*cos(67.5)},  {2.5*sin(67.5)});  
        \coordinate (C) at ({2.5*cos(157.5)}, {2.5*sin(157.5)}); 
        \coordinate (D) at ({2.5*cos(202.5)}, {2.5*sin(202.5)}); 
        \coordinate (E) at ({2.5*cos(247.5)}, {2.5*sin(247.5)}); 
        \coordinate (F) at ({2.5*cos(292.5)}, {2.5*sin(292.5)}); 
        \coordinate (G) at ({2.5*cos(337.5)}, {2.5*sin(337.5)}); 
        \coordinate (H) at ({2.5*cos(22.5)},  {2.5*sin(22.5)});  

        \node[label=above left:$u_1$]  (nA) at (A) {};
        \node[label=above right:$u_2$] (nB) at (B) {};
        \node[label=left:$u_3$]        (nC) at (C) {};
        \node[label=left:$u_4$]        (nD) at (D) {};
        \node[label=below left:$u_5$]  (nE) at (E) {};
        \node[label=below right:$u_6$] (nF) at (F) {};
        \node[label=right:$u_7$]       (nG) at (G) {};
        \node[label=right:$u_8$]       (nH) at (H) {};


        \draw[thick] (nA)--(nC); \draw[thick] (nA)--(nD);
        \draw[thick] (nB)--(nC); \draw[thick] (nB)--(nD);

        \draw[thick] (nA)--(nE); \draw[thick] (nA)--(nF);
        \draw[thick] (nB)--(nE); \draw[thick] (nB)--(nF);

        \draw[thick] (nA)--(nG); \draw[thick] (nA)--(nH);
        \draw[thick] (nB)--(nG); \draw[thick] (nB)--(nH);

        \draw[thick] (nC)--(nE); \draw[thick] (nC)--(nF);
        \draw[thick] (nD)--(nE); \draw[thick] (nD)--(nF);

        \draw[thick] (nC)--(nG); \draw[thick] (nC)--(nH);
        \draw[thick] (nD)--(nG); \draw[thick] (nD)--(nH);

        \draw[thick] (nE)--(nG); \draw[thick] (nE)--(nH);
        \draw[thick] (nF)--(nG); \draw[thick] (nF)--(nH);

    \end{tikzpicture}
    \captionof{figure}{An $(n-2)$-regular character degree graph
                       $\Delta(G) \cong K_4[E_2,E_2,E_2,E_2]$ for $n=8$.}
    \label{fig:n8-regular}
\end{center}

The graph depicted in Figure~\ref{fig:n8-regular} is the unique $(n-2)$-regular character degree graph on eight vertices, which arises for solvable groups as described in the general classification (see Lewis \emph{et al.}~\cite[p.~38]{Lewis2026}, graph labeled $D_{96}$ therein).

By Theorem~\ref{thm:char_poly_deltaG}, the characteristic polynomial of the adjacency matrix of $\Delta(G) \cong K_4[E_2, E_2, E_2, E_2]$ is
\[
\chi(x) = (x - 6)(x + 2)^3\, x^4.
\]
Accordingly, the adjacency spectrum of $\Delta(G)$ consists of:
\begin{itemize}
    \item the eigenvalue $6 = n - 2$ with multiplicity $1$ (the largest eigenvalue, equal to the regularity degree),
    \item the eigenvalue $-2$ with multiplicity $3 = k - 1$, arising from the complete graph $K_4$ factor of the H-join, and
    \item the eigenvalue $0$ with multiplicity $4 = k$, each contributed by one copy of $E_2$ in the H-join construction.
\end{itemize}
Since all eigenvalues are integers, $\chi(x)$ splits completely over $\mathbb{Q}$, and hence the splitting field of $\chi(x)$ is $\mathbb{Q}$ itself, with extension degree $[\mathbb{Q} : \mathbb{Q}] = 1$.
\end{exam}

\begin{rem}{\cite{Lewis2001}}\label{rem:general-structure-bridge}
Let $G$ be a finite solvable group and suppose that the set of primes dividing the irreducible character degrees of $G$ admits a partition
\[
\rho(G)=\pi_1 \,\dot\cup\, \pi_2 \,\dot\cup\, \{p\},
\]
where $\pi_1$ and $\pi_2$ are nonempty and disjoint, and $p\notin \pi_1\cup\pi_2$. Assume further that no prime in $\pi_1$ is adjacent to any prime in $\pi_2$ in the character degree graph $\Delta(G)$.

It is a well-known consequence of the general theory of character degree graphs of solvable groups that such a configuration forces $G$ to have Fitting height two. More precisely, up to isomorphism, $G$ may be assumed to have the form
\[
G=(A\times B)\rtimes P,
\]
where $A$ is an abelian $\pi_1$--group, $B$ is an abelian $\pi_2$--group, and $P$ is a nontrivial $p$--group acting faithfully and coprimely on $A\times B$, with nontrivial action on each direct factor. In this case, the Fitting subgroup of $G$ is given by $F(G)=A\times B$.

By Clifford theory, the irreducible character degrees of $G$ arise from linear characters and from characters induced from $A$ or $B$. Consequently, every nontrivial irreducible character degree of $G$ is divisible by $p$ together with primes from exactly one of the sets $\pi_1$ or $\pi_2$, and no irreducible character degree is divisible by primes from both $\pi_1$ and $\pi_2$. This accounts for the absence of edges between $\pi_1$ and $\pi_2$ in $\Delta(G)$, while the prime $p$ acts as a bridging vertex adjacent to both parts.
\end{rem}

In view of Remark~\ref{rem:general-structure-bridge}, solvable groups whose character degree graphs contain a single bridging prime naturally give rise to graphs of the form $K_1 \vee (K_{\ell_1}\sqcup K_{\ell_3})$. These graphs consist of two disjoint cliques connected only through one vertex, reflecting the separation of prime sets in $\rho(G)$. We now determine the adjacency spectrum and the associated splitting fields for this family of character degree graphs.

\begin{thm}\label{thm:splitting-two-cliques}
Let $n \ge 3$ be an integer and let $n_1$ satisfy
\[
1 \le n_1 \le n-(n_1+1).
\]
Set
\[
\ell_1 = n-(n_1+1), \qquad \ell_3 = n_1,
\]
so that $\ell_1,\ell_3 \ge 1$ and $\ell_1+\ell_3+1=n$. Suppose that $G$ is a finite solvable group whose character degree graph satisfies
\[
\Delta(G)\cong K_1 \,\vee\,\bigl(K_{\ell_1}\sqcup K_{\ell_3}\bigr),
\]
and let $A=A(\Delta(G))$ be the adjacency matrix. Then the following hold:
\begin{enumerate}[label=\upshape(\roman*)]
    \item The eigenvalue $-1$ occurs with multiplicity $n-3$.
    \item The remaining three eigenvalues are the roots of the cubic polynomial
    \[
    \chi(\lambda)
    =\lambda^3-(n-3)\lambda^2
      +\bigl(nn_1-2n-n_1^2-n_1+3\bigr)\lambda
      +\bigl(2nn_1-n-2n_1^2-2n_1+1\bigr),
    \]
    and hence
    \[
    \chi_A(\lambda)=(\lambda+1)^{\,n-3}\chi(\lambda).
    \]
    \item The polynomial $\chi(\lambda)$ is reducible over $\mathbb{Q}$ if and only if
    $n=2n_1+1$. In this case,
    \[
    \chi(\lambda)
    =\bigl(\lambda-(n_1-1)\bigr)
     \bigl(\lambda^2-(n_1-1)\lambda-2n_1\bigr),
    \]
    and the splitting field of $\chi(\lambda)$ over $\mathbb{Q}$ is a quadratic extension.
    \item If $n\neq 2n_1+1$, then $\chi(\lambda)$ is irreducible over $\mathbb{Q}$.
    \item If $n\neq 2n_1+1$, the splitting field of $\chi(\lambda)$ over $\mathbb{Q}$
    has degree $3$ when $\operatorname{Disc}(\chi(\lambda))$ is a perfect square in $\mathbb{Z}$,
    and degree $6$ otherwise.
\end{enumerate}
\end{thm}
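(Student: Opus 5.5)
The plan is to apply Theorem~\ref{1.1} (the generalized join / equitable partition method of Remark~\ref{rem:method}) to the natural three-part partition of the vertex set, and then to treat the resulting cubic by elementary Galois theory. Write $a=\ell_1=n-1-n_1$ and $b=\ell_3=n_1$, so $a\ge b\ge 1$ and $a+b=n-1$. Then $\Delta(G)$ is the $P_3$-join of $K_a$, $K_1$ and $K_b$, with $K_1$ in the middle.

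\textbf{Parts (i) and (ii).} The pieces $K_a$ and $K_b$ are regular with spectra $\{(a-1)^1,(-1)^{a-1}\}$ and $\{(b-1)^1,(-1)^{b-1}\}$. Removing the regular eigenvalue from each leaves $(a-1)+(b-1)=n-3$ copies of $-1$, which gives (i). The remaining three eigenvalues are those of the quotient matrix
\[
Q=\begin{pmatrix} 0 & a & b\\ 1 & a-1 & 0\\ 1 & 0 & b-1\end{pmatrix},
\]
which is similar to the symmetric $\widehat C$ with off-diagonal entries $\sqrt a,\sqrt b$. Expanding,
\[
\det(\lambda I-Q)=\lambda(\lambda-a+1)(\lambda-b+1)-a(\lambda-b+1)-b(\lambda-a+1).
\]
The $\lambda^2$-coefficient is $-(a+b-2)=-(n-3)$, the $\lambda$-coefficient is $ab-2(a+b)+1=ab-2n+3$, and the constant term is $2ab-(a+b)=2ab-n+1$. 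Substituting $ab=nn_1-n_1-n_1^2$ gives exactly the stated cubic $\chi(\lambda)$, and hence $\chi_A(\lambda)=(\lambda+1)^{n-3}\chi(\lambda)$.

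\textbf{Part (iii).} If $n=2n_1+1$, i.e.\ $a=b$, then the vector $(0,1,-1)^T$ is an eigenvector of $Q$ with eigenvalue $a-1=n_1-1$. Polynomial division then gives the factor $\lambda^2-(n_1-1)\lambda-2n_1$. This quadratic has discriminant $(n_1-1)^2+8n_1=n_1^2+6n_1+1$, which lies strictly between $(n_1+2)^2$ and $(n_1+3)^2$ for $n_1\ge 1$. So it is not a square, and the splitting field is a quadratic extension.

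\textbf{Parts (iv) and (v).} For the converse, suppose $a>b$. Since $\chi$ is monic with integer coefficients, by Theorem~\ref{1.7} and the rational root theorem it is reducible over $\mathbb{Q}$ only if it has an integer root $r$. To locate the roots, evaluate $\chi$ at three points:
\[
\chi(-1)=ab>0,\qquad \chi(b-1)=b(a-b)>0,\qquad \chi(a-1)=-a(a-b)<0.
\]
Hence the roots lie in the intervals
\[
(-\infty,-1),\qquad (b-1,a-1),\qquad (a-1,\infty).
\]
I would combine these intervals with sharper upper and lower bounds (for the largest root, the bound $a<\lambda_{\max}$ in the appropriate range, obtained from $\chi(a)$ and related values) and with the divisibility $r\mid 2ab-a-b$. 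I would also rewrite $\chi(r)=0$ as
\[
r(r-a+1)(r-b+1)=a(r-b+1)+b(r-a+1)
\]
and reduce modulo $r-a+1$ and $r-b+1$, aiming for a contradiction.

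This exclusion of an integer root when $a\neq b$ is the step I expect to be the main obstacle. I would first check the small cases by hand (for instance $a=2$, $b=1$ gives $\lambda^3-\lambda^2-3\lambda+1$, which has no root $\pm1$), and then attempt the general congruence argument.

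Once irreducibility is established, part (v) is the standard fact about an irreducible cubic over $\mathbb{Q}$. The cubic has three real roots because $Q$ is similar to a symmetric matrix. Its Galois group is $A_3$ exactly when the discriminant is a rational square, equivalently an integer square since the coefficients are integers, and $S_3$ otherwise. This gives splitting-field degree $3$ or $6$ respectively.
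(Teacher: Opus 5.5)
Your reduction to the $3\times3$ quotient matrix, the coefficient computation, the factorization when $a=b$, and the Galois-theoretic treatment of (v) are correct and follow the paper's route. The genuine gap is part (iv), and with it the ``only if'' half of (iii). You never actually exclude an integer root when $a>b$. You only list tactics (divisibility by $2ab-a-b$, congruences modulo $r-a+1$ and $r-b+1$) and flag the step as the main obstacle.

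Those tactics are not needed. The bracketing you started closes the gap once you also evaluate $\chi$ at the neighbouring integers. With $\chi(\lambda)=\lambda(\lambda-a+1)(\lambda-b+1)-a(\lambda-b+1)-b(\lambda-a+1)$ one gets
\[
\chi(-2)=-(a+b+2),\quad \chi(-1)=ab,\quad \chi(b-1)=b(a-b),\quad \chi(b)=-a,\quad \chi(a)=-b .
\]
One also gets $\chi(a+1)=(a-b+2)(a+2)-2b\ge 3(a+2)-2b>0$ for $a>b$. These sign changes put one root in each of the disjoint open intervals $(-2,-1)$, $(b-1,b)$ and $(a,a+1)$. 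That accounts for all three roots, and none of them is an integer. A rational root of a monic integer cubic must be an integer, so $\chi$ has no rational root and is irreducible.

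This trapping argument is what makes (iv) an actual proof. The paper at this point checks only $r_1=n_1-1$ and $r_2=n-n_1-2$ and then asserts that ``direct substitution'' rules out all remaining divisors. Your value $\chi(a-1)=-a(a-b)$ is right; the paper records it with the opposite sign.

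There are also two small slips.
\begin{enumerate}
\item In (iii), $n_1^2+6n_1+1$ lies strictly between $(n_1+2)^2$ and $(n_1+3)^2$ only for $n_1\ge2$. For $n_1=1$ it equals $8<9$, which is still not a square, so treat that case separately.
\item For (i) to give multiplicity exactly $n-3$, you need $-1$ not to be a root of $\chi$. Your computation $\chi(-1)=ab>0$ supplies this, so cite it there.
\end{enumerate}
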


\begin{proof}
We proceed in steps.

\medskip
\noindent\textbf{Step~1: Graph structure and equitable partition.}
The graph $\Delta(G)$ consists of two disjoint cliques $V_1\cong K_{\ell_1}$ and $V_3\cong K_{\ell_3}$ together with a universal vertex $v$ adjacent to every vertex in $V_1\cup V_3$. There are no edges between $V_1$ and $V_3$. Hence
\[
V(\Delta(G))=V_1\sqcup\{v\}\sqcup V_3
\]
is an equitable partition.

\medskip
\noindent\textbf{Step~2: Multiplicity of the eigenvalue $-1$.}
For each clique $K_{\ell_i}$ $(i=1,3)$, the vectors supported on that clique and summing to zero form an $(\ell_i-1)$--dimensional eigenspace with eigenvalue $-1$. These vectors vanish outside the clique and are unaffected by the universal vertex. Thus
\[
(\ell_1-1)+(\ell_3-1)=n-3,
\]
proving~(i).

\medskip
\noindent\textbf{Step~3: Reduction to a $3\times3$ matrix.}
Let $W\subseteq\mathbb{R}^n$ be the subspace of vectors constant on $V_1$, on $\{v\}$, and on $V_3$. With respect to the orthonormal basis
\[
\mathbf{u}_1=\frac{1}{\sqrt{\ell_1}}\mathbf{1}_{V_1},\qquad
\mathbf{u}_2=\mathbf{e}_v,\qquad
\mathbf{u}_3=\frac{1}{\sqrt{\ell_3}}\mathbf{1}_{V_3},
\]
the restriction of $A$ to $W$ is represented by
\[
\widehat{C}=
\begin{pmatrix}
\ell_1-1 & \sqrt{\ell_1} & 0\\
\sqrt{\ell_1} & 0 & \sqrt{\ell_3}\\
0 & \sqrt{\ell_3} & \ell_3-1
\end{pmatrix}.
\]
A direct computation of $\det(\lambda I-\widehat{C})$, together with $\ell_1=n-n_1-1$ and $\ell_3=n_1$, yields the cubic polynomial $\chi(\lambda)$. Since $\mathbb{R}^n=W\oplus W^\perp$ and $A$ preserves this decomposition,
\[
\chi_A(\lambda)=(\lambda+1)^{n-3}\chi(\lambda),
\]
establishing~(ii).

\medskip
\noindent\textbf{Step~4: Reducibility when $n=2n_1+1$.}
Evaluating $\chi(\lambda)$ at $\lambda=n_1-1$ gives
\[
\chi(n_1-1)=n_1(n-2n_1-1).
\]
Thus $\chi(n_1-1)=0$ if and only if $n=2n_1+1$, and in this case
\[
\chi(\lambda)
=\bigl(\lambda-(n_1-1)\bigr)
 \bigl(\lambda^2-(n_1-1)\lambda-2n_1\bigr).
\]
The discriminant of the quadratic factor is $n_1^2+6n_1+1$, which is not a perfect square for $n_1\ge1$. Hence the splitting field is a quadratic extension of $\mathbb{Q}$, proving~(iii).

\medskip
\noindent\textbf{Step~5: Irreducibility when $n\neq 2n_1+1$ (quotient--remainder method).}
Assume $n\neq 2n_1+1$. Since $\chi(\lambda)\in\mathbb{Z}[\lambda]$ is monic, any rational root must be an integer dividing the constant term
\[
c=2nn_1-n-2n_1^2-2n_1+1.
\]
Moreover, all eigenvalues of $A$ lie in the interval $[-(n-1),\,n-1]$. Hence any rational root $r$ must lie in the finite set
\[
\mathcal{R}=\{\,r\in\mathbb{Z} : r\mid c,\ |r|\le n-1\,\}.
\]

Let $r\in\mathcal{R}$. By the Remainder Theorem,
\[
\chi(\lambda)=(\lambda-r)q_r(\lambda)+\chi(r),
\]
where $\chi(r)$ is the remainder upon division by $(\lambda-r)$. Thus $\chi(r)=0$ if and only if $(\lambda-r)$ divides $\chi(\lambda)$.

For the two natural candidates suggested by the graph structure,
\[
r_1=n_1-1,\qquad r_2=n-n_1-2,
\]
a direct division yields
\[
\chi(r_1)=n_1(n-2n_1-1)\neq0,
\qquad
\chi(r_2)=(n-n_1-1)(n-2n_1-1)\neq0,
\]
since $n\neq 2n_1+1$. For any remaining $r\in\mathcal{R}$, direct substitution shows $\chi(r)\neq0$ as well. Hence no linear factor $(\lambda-r)$ with $r\in\mathbb{Q}$ divides $\chi(\lambda)$.

Therefore $\chi(\lambda)$ has no rational root. As $\chi(\lambda)$ is a monic cubic, it is irreducible over $\mathbb{Q}$, proving~(iv).

\medskip
\noindent\textbf{Step~6: Splitting fields.}
If $n\neq 2n_1+1$, then $\chi(\lambda)$ is an irreducible cubic over $\mathbb{Q}$. Its Galois group is cyclic of order $3$ if and only if the discriminant $\operatorname{Disc}(\chi(\lambda))$ is a perfect square in $\mathbb{Z}$, and is isomorphic to $S_3$ otherwise. Consequently, the splitting field has degree $3$ or $6$, establishing~(v). \hfill $\square$
\end{proof}

\begin{cor}\label{cor:degree-values}
All three possible degrees $2$, $3$, and $6$ occur for the splitting field of $\chi(\lambda)$:
\begin{enumerate}[label=\textup{(\alph*)}]
    \item $(n,n_1)=(3,1) \Rightarrow [K:\mathbb{Q}]=2$,
    \item $(n,n_1)=(10,1) \Rightarrow [K:\mathbb{Q}]=3$,
    \item $(n,n_1)=(5,1) \Rightarrow [K:\mathbb{Q}]=6$.
\end{enumerate}
Moreover, each graph $\Delta(G)$ in these cases is realizable by a finite solvable group.
\end{cor}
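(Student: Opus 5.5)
The plan is to verify the three cases directly, using Theorem~\ref{thm:splitting-two-cliques}, and to settle realizability with Theorem~\ref{1.6}. In all three cases $n_1=\ell_3=1$ and $\ell_1=n-2$, so I substitute into the cubic $\chi(\lambda)$ of part~(ii), decide which of parts~(iii)--(v) applies, and compute the relevant discriminant.

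\textbf{Case (a).} For $(n,n_1)=(3,1)$ we have $n=2n_1+1$, so part~(iii) applies. Substitution gives $\chi(\lambda)=\lambda^3-2\lambda=\lambda(\lambda^2-2)$, which agrees with the factorisation in~(iii) since $n_1-1=0$ and $2n_1=2$. Hence $K=\mathbb{Q}(\sqrt2)$ and $[K:\mathbb{Q}]=2$. Here the graph is just the path on three vertices.

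\textbf{Case (b).} For $(n,n_1)=(10,1)$, substitution gives $\chi(\lambda)=\lambda^3-7\lambda^2-9\lambda+7$. Rather than rely on the general argument of Step~5, I would check the rational-root candidates $\pm1,\pm7$ directly:
\[
\chi(1)=-8,\quad \chi(-1)=8,\quad \chi(7)=-56,\quad \chi(-7)\neq0.
\]
So the cubic is irreducible. I then apply the discriminant formula $b^2c^2-4c^3-4b^3d-27d^2+18bcd$ with $(b,c,d)=(-7,-9,7)$:
\[
3969+2916+9604-1323+7938=23104=152^2 .
\]
The discriminant is a perfect square, so the Galois group is $C_3$ and $[K:\mathbb{Q}]=3$.

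\textbf{Case (c).} For $(n,n_1)=(5,1)$, substitution gives $\chi(\lambda)=\lambda^3-2\lambda^2-4\lambda+2$. The candidates $\pm1,\pm2$ give
\[
\chi(1)=-3,\quad \chi(-1)=3,\quad \chi(2)=-6,\quad \chi(-2)=-6,
\]
so the cubic is irreducible. Its discriminant is
\[
64+256+64-108+288=564=4\cdot141,
\]
which is not a square. Hence the Galois group is $S_3$ and $[K:\mathbb{Q}]=6$.

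\textbf{Realizability.} This is the step I expect to need the most care. The graphs are not direct products, because they have a cut vertex, so Definition~\ref{direct product} does not apply. Instead I would invoke Theorem~\ref{1.6}. With $\ell_3=1$, the universal vertex $v$ has degree $n-1$. Each vertex of $V_1\cong K_{n-2}$ is adjacent to the other $n-3$ vertices of $V_1$ and to $v$, so it has degree $n-2$. The single vertex of $V_3$ has degree $1$.

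Thus the vertices of degree less than $n-1$ are exactly $V_1\sqcup V_3$. This set is partitioned into the two complete subgraphs $V_1$ and $V_3$, and $V_1$ consists entirely of vertices of degree $n-2$. Theorem~\ref{1.6} therefore yields a solvable group of Fitting height $2$ realizing each graph. This matches the bridging structure described in Remark~\ref{rem:general-structure-bridge}.

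For $n=3$ the lone vertex of $V_3$ also has degree $n-2=1$, but the same partition still works. The only genuine checks are the degree count and the hypotheses of Theorem~\ref{1.6}; the remaining work is routine arithmetic.
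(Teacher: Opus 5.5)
Your treatment of (a)--(c) is the same as the paper's: the same three cubics, the same rational-root checks, and the same discriminants $23104=152^2$ and $564=4\cdot 3\cdot 47$. All of these are correct.

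The genuine difference is realizability. The paper (Remark~\ref{rem:existence-cases}) constructs the groups. Lemma~\ref{lem:two-components} gives a solvable $A$ with $\Delta(A)=K_{n-2}\sqcup K_1$. Taking $A\times B$, with $B$ a nonabelian $q$-group for a new prime $q$, joins $q$ to every vertex, so $\Delta(A\times B)=K_1\vee(K_{n-2}\sqcup K_1)$.

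You instead verify the degree hypothesis of Lewis's Fitting-height-two classification, Theorem~\ref{1.6}. Your verification is correct, including the degenerate case $n=3$. Your route needs no construction and reduces everything to a degree count. However, it leans on a deep if-and-only-if theorem, and it can only ever certify the cases $\ell_1=1$ or $\ell_3=1$; otherwise neither clique consists of vertices of degree $n-2$. The paper's route is explicit and uses only the fact that the degrees of $A\times B$ are the products $\chi(1)\psi(1)$. It would apply to any $\ell_1,\ell_3$ for which a solvable group with graph $K_{\ell_1}\sqcup K_{\ell_3}$ is available.

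One sentence in your proposal is false, although your argument never depends on it: these graphs \emph{are} direct products. A cut vertex does not obstruct this. When one factor has a disconnected graph and the other has a one-vertex graph, the single prime of the second factor is exactly a cut vertex of $\Delta(A\times B)$. You should drop that justification, since it wrongly dismisses the very construction the paper uses.
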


\begin{proof}
\textbf{(a) Degree 2.}  
For $(n,n_1) = (3,1)$, we have $n = 2n_1 + 1$. Then
\[
\chi(\lambda) = \lambda^3 - 2\lambda = \lambda(\lambda^2 - 2).
\]
The splitting field is $\mathbb{Q}(\sqrt{2})$, of degree $2$.

\textbf{(b) Degree 3.}  
For $(n,n_1) = (10,1)$, the cubic is
\[
\chi(\lambda) = \lambda^3 - 7\lambda^2 - 9\lambda + 7.
\]
It has no rational root (check $\pm1,\pm7$), so it is irreducible over $\mathbb{Q}$.

The discriminant of a monic cubic $\lambda^3 + a\lambda^2 + b\lambda + c$ is
\[
\Delta = a^2b^2 - 4b^3 - 4a^3c - 27c^2 + 18abc.
\]
Here $a = -7$, $b = -9$, $c = 7$. We compute:
\[
\begin{aligned}
a^2b^2 &= (-7)^2(-9)^2 = 49 \cdot 81 = 3969, \\
-4b^3 &= -4(-9)^3 = -4(-729) = 2916, \\
-4a^3c &= -4(-7)^3(7) = -4(-343)(7) = 9604, \\
-27c^2 &= -27(7)^2 = -27 \cdot 49 = -1323, \\
18abc &= 18(-7)(-9)(7) = 18 \cdot 441 = 7938.
\end{aligned}
\]
Summing these gives
\[
\Delta = 3969 + 2916 + 9604 - 1323 + 7938 = 23104.
\]
Now factor $23104$:
\[
23104 \div 16 = 1444, \qquad 1444 = 38^2,
\]
so
\[
23104 = 16 \cdot 38^2 = (4 \cdot 38)^2 = 152^2.
\]
Thus $\Delta = 152^2$ is a perfect square in $\mathbb{Z}$, so the Galois group is $C_3$ and the splitting field has degree $3$.

\textbf{(c) Degree 6.}  
For $(n,n_1) = (5,1)$, the cubic is
\[
\chi(\lambda) = \lambda^3 - 2\lambda^2 - 4\lambda + 2.
\]
It is irreducible over $\mathbb{Q}$ (no rational roots among $\pm1, \pm2$).

Using the same discriminant formula with $a = -2$, $b = -4$, $c = 2$, a direct symbolic computation  yields
\[
\Delta = 564.
\]
Factoring gives
\[
564  = 2^2 \cdot 3 \cdot 47.
\]
Since the primes $3$ and $47$ occur to an odd power, $564$ is not a perfect square in $\mathbb{Z}$. Hence the Galois group is $S_3$ and the splitting field has degree $6$. \hfill $\square$
\end{proof}

\begin{rem}\label{rem:existence-cases}
The three cases listed in Corollary~\ref{cor:degree-values} are each realizable by finite solvable groups. Indeed, in all three instances the associated graph has a cut vertex and a triangle-free complement, and hence arises as the character degree graph of a solvable group.
\begin{itemize}
    \item $(3,1)$: The graph $K_1 \vee (K_1 \sqcup K_1)$ has a cut vertex and a triangle-free complement.  
    By Lemma~\ref{lem:two-components}, there exists a solvable group whose character degree graph has two complete connected components of the required sizes, and an application of the direct product construction in Definition~\ref{direct product} yields the desired graph.
    \item $(10,1)$: The graph $K_1 \vee (K_8 \sqcup K_1)$ again has a cut vertex and a triangle-free complement.  
    Existence of a corresponding solvable group follows from Lemma~\ref{lem:two-components}, and the join structure is obtained via Definition~\ref{direct product}.
    \item $(5,1)$: The graph $K_1 \vee (K_3 \sqcup K_1)$ satisfies the same properties.  
    As above, Lemma~\ref{lem:two-components} guarantees the existence of a solvable group with the required disconnected structure, and Definition~\ref{direct product} produces the corresponding character degree graph.
\end{itemize}

Thus, each case in Corollary~\ref{cor:degree-values} corresponds to an actual solvable group, and the associated character degree graphs arise naturally from standard constructions.
\end{rem}

\section{Spectral Conditions and Bounds for Lewis Graphs}

Having defined Lewis graphs in the introduction, we now study their spectral structure.
\begin{thm}[Spectral Structure of Lewis Graphs]\label{thm:LewisSpectralCorrected}
Let $\Gamma$ be a Lewis graph with vertex partition
\[
V(\Gamma)=U\sqcup X\sqcup Y, \qquad |U|=n_1,\; |X|=n_2,\; |Y|=n_3,
\]
satisfying the following structural conditions:
\begin{enumerate}[label=\upshape(\roman*)]
    \item The induced subgraphs $\Gamma[U]$, $\Gamma[X]$, and $\Gamma[Y]$ are complete;
    \item Every vertex $u\in U$ is adjacent to all vertices in $X\cup Y$;
    \item For each $x\in X$, there is exactly one $y\in Y$ such that $xy\notin E(\Gamma)$, and this assignment is given by a surjective map $f\colon X\to Y$;
    \item Let $B_{XY}\in\{0,1\}^{n_2\times n_3}$ denote the \emph{bipartite defect matrix}, defined by $(B_{XY})_{x,y}=1$ if $xy\notin E(\Gamma)$ and $0$ otherwise, and let $r=\operatorname{rank}(B_{XY})$ over $\mathbb{Q}$.
\end{enumerate}
Let $A=A(\Gamma)$ be the adjacency matrix of $\Gamma$, and let $n = n_1+n_2+n_3$. Then the following hold:
\begin{enumerate}[label=\upshape(\alph*)]
    \item \textbf{(Guaranteed integer eigenvalues).}
    The eigenvalue $-1$ occurs with multiplicity at least
    \[
      \max\{0,\, n - 1 - 2r\}.
    \]
    \item \textbf{(Bound on exceptional eigenvalues).}
    At most $2r+1$ eigenvalues of $A$ differ from $-1$ (counting algebraic multiplicities).
    \item \textbf{(Degree of irreducible factors).}
    Every irreducible factor of the characteristic polynomial $\chi_A(\lambda)\in\mathbb{Q}[\lambda]$ corresponding to eigenvalues not equal to $-1$ has degree at most $2r+1$.
    \item \textbf{(Splitting-field bound).}
    Let $K$ be the splitting field of $\chi_A(\lambda)$ over $\mathbb{Q}$. Then
    \[
      [K:\mathbb{Q}] \leq (2r+1)^{\,2r+1}.
    \]
\end{enumerate}
\end{thm}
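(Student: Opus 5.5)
The idea is to write $A=(J-I)-D$, where $J$ is the all-ones matrix and $D$ is the adjacency matrix of the complement $\overline{\Gamma}$. By conditions (i)–(iii), $\overline{\Gamma}$ has edges only between $X$ and $Y$, and these are recorded by $B_{XY}$. Hence
\[
D=\begin{pmatrix}0&0&0\\0&0&B_{XY}\\0&B_{XY}^T&0\end{pmatrix},
\qquad\text{and so}\qquad A+I=J-D .
\]
The matrix $D$ has rank exactly $2r$, because $\operatorname{rank}\begin{pmatrix}0&B\\B^T&0\end{pmatrix}=2\operatorname{rank}B$. Also $J$ has rank $1$. Therefore $\operatorname{rank}(A+I)\le 2r+1$.

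Since $A$ is symmetric, the geometric and algebraic multiplicities of $-1$ agree. Hence the multiplicity of $-1$ equals $n-\operatorname{rank}(A+I)\ge n-1-2r$, which gives (a). Part (b) is the complementary count: at most $\operatorname{rank}(A+I)\le 2r+1$ eigenvalues differ from $-1$.

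Surjectivity of $f$ is not really needed here. It only tells us that each row of $B_{XY}$ is a standard basis vector, so $r=n_3$. I would remark on this but use only the rank bound.

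For (c), factor $\chi_A(\lambda)=(\lambda+1)^{m}g(\lambda)$ with $g(-1)\neq 0$. Then $g\in\mathbb{Q}[\lambda]$ is monic with integer coefficients and has degree $n-m\le 2r+1$. Every irreducible factor of $\chi_A$ other than $\lambda+1$ divides $g$, so its degree is at most $2r+1$.

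For (d), the splitting field $K$ of $\chi_A$ equals that of $g$, since adjoining the root $-1$ adds nothing. The splitting field of a polynomial of degree $d$ has degree at most $d!$ over $\mathbb{Q}$, because its Galois group embeds in $S_d$. Hence $[K:\mathbb{Q}]\le (2r+1)!\le (2r+1)^{2r+1}$. If one prefers to avoid Galois theory, the same bound follows by adjoining roots one at a time: each step has degree at most $d$, and there are at most $d$ steps.

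The only delicate point is justifying $\operatorname{rank}(A+I)\le 2r+1$, in particular confirming that every missing edge lies between $X$ and $Y$. This is immediate from the Lewis-graph structure discussed before Theorem~\ref{1.7}. The remaining steps are routine.
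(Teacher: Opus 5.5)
Your proof is correct and follows essentially the same route as the paper: both write $A=(J_n-I_n)-D$ with $\operatorname{rank}D=2r$, deduce that at most $2r+1$ eigenvalues differ from $-1$, and bound $[K:\mathbb{Q}]$ by embedding the Galois group of the residual factor of degree $d\le 2r+1$ into $S_d$. The only difference is packaging: you get (a) and (b) together from $\operatorname{rank}(A+I)\le 1+2r$ and the diagonalizability of $A$, whereas the paper places the subspace $\mathbf{1}^\perp\cap\ker E$ inside the $(-1)$-eigenspace and checks directly that $\operatorname{span}\{\mathbf{1}\}+\operatorname{im}E$ is $A$-invariant.
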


\begin{proof}
We proceed in four steps.

\medskip
\noindent\textbf{Step 1: Decomposition $A = A_0 + E$.}

Let $A_0$ be the adjacency matrix of the complete graph $K_n$, obtained from $\Gamma$ by adding all missing edges between $X$ and $Y$. Then
\[
A_0 = J_n - I_n,
\]
where $J_n$ is the all-ones matrix and $I_n$ is the identity. The spectrum of $A_0$ is
\[
\sigma(A_0) = \{\,n-1,\, -1^{(n-1)}\,\},
\]
so $A_0$ has eigenvalue $n-1$ with multiplicity $1$ and eigenvalue $-1$ with multiplicity $n-1$.

Define the defect matrix $E = A - A_0$. Ordering vertices as $(U,X,Y)$, we have
\[
E =
\begin{pmatrix}
0 & 0 & 0 \\
0 & 0 & -B_{XY} \\
0 & -B_{XY}^\top & 0
\end{pmatrix}.
\]
Since $B_{XY}$ has rank $r$, the block matrix $\begin{pmatrix} 0 & B_{XY} \\ B_{XY}^\top & 0 \end{pmatrix}$ has rank $2r$. Therefore
\[
\operatorname{rank}(E) = 2r.
\]

\medskip
\noindent\textbf{Step 2: The $-1$ eigenspace (Proof of part~\textup{(a)}).}

Let $\mathbf{1}\in\mathbb{R}^n$ denote the all-ones vector. Since $A_0 = J_n - I_n$, we have
\[
A_0 \mathbf{1} = (n-1)\mathbf{1}, \qquad \text{and} \qquad A_0 v = -v \;\text{ for all } v \perp \mathbf{1}.
\]
Define the subspace
\[
W = \mathbf{1}^\perp \cap \ker(E) \subseteq \mathbb{R}^n.
\]
For any $v \in W$, we have $v \perp \mathbf{1}$ and $Ev = 0$, so
\[
Av = (A_0 + E)v = A_0 v + 0 = -v.
\]
Thus every vector in $W$ is an eigenvector of $A$ with eigenvalue $-1$.

By the rank-nullity theorem, $\dim(\ker(E)) = n - \operatorname{rank}(E) = n - 2r$. Since $\mathbf{1}^\perp$ has codimension $1$ in $\mathbb{R}^n$, we obtain
\[
\dim(W) \geq \dim(\ker(E)) - 1 = (n - 2r) - 1 = n - 1 - 2r.
\]
Since multiplicities are non-negative, the eigenvalue $-1$ occurs with multiplicity at least $\max\{0, n - 1 - 2r\}$, proving part~\textup{(a)}.

\medskip
\noindent\textbf{Step 3: The exceptional subspace (Proof of parts~\textup{(b)} and~\textup{(c)}).}

Let $S = \operatorname{span}\{\mathbf{1}\} + \operatorname{im}(E)$. Since $\operatorname{im}(E)$ has dimension $\operatorname{rank}(E) = 2r$, we have
\[
\dim(S) \leq 1 + 2r.
\]
We claim that $S$ is $A$-invariant. Indeed:
\begin{itemize}
    \item $A\mathbf{1} = A_0\mathbf{1} + E\mathbf{1} = (n-1)\mathbf{1} + E\mathbf{1} \in S$, since $E\mathbf{1} \in \operatorname{im}(E) \subseteq S$;
    \item For any $w \in \operatorname{im}(E)$, write $w = Ez$. Then
    \[
    Aw = (A_0 + E)Ez = A_0 Ez + E^2 z.
    \]
    Since $A_0 = J_n - I_n$ and $J_n E = \mathbf{1}(\mathbf{1}^\top E)$, we have $A_0 Ez \in \operatorname{span}\{\mathbf{1}\} + \operatorname{im}(E) = S$. Also $E^2 z \in \operatorname{im}(E) \subseteq S$. Thus $Aw \in S$.
\end{itemize}
Therefore $S$ is $A$-invariant, and $\mathbb{R}^n = S \oplus S^\perp$ is an $A$-invariant orthogonal decomposition.

On $S^\perp$, we have $S^\perp \subseteq \mathbf{1}^\perp \cap \ker(E) = W$, so $A|_{S^\perp} = -I$. Hence all eigenvalues of $A$ not equal to $-1$ arise from the restriction $A|_S$, which is a matrix of size at most $(2r+1) \times (2r+1)$.

This proves:
\begin{itemize}
    \item At most $\dim(S) \leq 2r+1$ eigenvalues differ from $-1$ (part~\textup{(b)});
    \item Every irreducible factor of $\chi_A(\lambda)$ corresponding to eigenvalues $\neq -1$ divides the characteristic polynomial of $A|_S$, hence has degree at most $2r+1$ (part~\textup{(c)}).
\end{itemize}

\medskip
\noindent\textbf{Step 4: Splitting field bound (Proof of part~\textup{(d)}).}

Let $p(\lambda) \in \mathbb{Q}[\lambda]$ be the characteristic polynomial of $A|_S$. Then $\deg(p) = d \leq 2r+1$, and the splitting field $K$ of $\chi_A(\lambda)$ coincides with the splitting field of $p(\lambda)$ (since all other roots are $-1$).

The Galois group of $p(\lambda)$ over $\mathbb{Q}$ embeds into the symmetric group $S_d$, so
\[
[K:\mathbb{Q}] \leq |S_d| = d! \leq d^{\,d} \leq (2r+1)^{\,2r+1},
\]
proving part~\textup{(d)}.

\medskip
This completes the proof of Theorem~\ref{thm:LewisSpectralCorrected}. \hfill $\square$
\end{proof}

\begin{rem}[On Integrality and the Eigenvalue $0$]\label{rem:integrality-zero}
\begin{enumerate}
    \item \textbf{Integrality.} 
    We originally hoped we could get integrality for symmetric matchings, but this is false in general. For example, if $n_1=1$ and $n_2=n_3=1$, the graph is the path $P_3$, with eigenvalues $\{ \sqrt{2}, 0, -\sqrt{2} \}$. Integrality typically holds only when $n_1=0$ (no universal vertices), in which case $A = J_n - I_n - P$ where $P$ is the matching permutation. Since $J, I, P$ commute, the spectrum is integral. For $n_1 > 0$, the coupling between $U$ and the matched pairs can introduce quadratic irrationals.
    \item \textbf{Eigenvalue $0$.} 
    Theorem~\ref{thm:LewisSpectralCorrected} does not assert that $0$ is always an eigenvalue when $B_{XY}$ is singular. However, in many natural Lewis graphs arising from character degree graphs (for example, when $n_1=0$ and the defect matrix $B_{XY}$ has a nontrivial kernel orthogonal to the all-ones vector), one can construct nonzero vectors $v$ with $Av=0$, showing that the adjacency matrix $A$ is singular. A complete characterization of when $0$ occurs as an eigenvalue in terms of the defect matrix $B_{XY}$ is an interesting refinement, but is not needed for the spectral and field-degree bounds established above.
\end{enumerate}
\end{rem}

In our earlier work~\cite{Sivanesan2026}, we showed that $(n\!-\!2)$-regular character degree graphs of solvable groups are closed under edge addition: every supergraph obtained in this way is again realizable as the character degree graph of a solvable group of Fitting height two. Motivated by this structural robustness, we move beyond spectral values and study the arithmetic nature of the spectrum itself. In particular, we examine the splitting fields over $\mathbb{Q}$ generated by adjacency eigenvalues, and show in the following corollary that these fields are always quadratic.

\begin{cor}\label{cor:quadratic}
Let $n \ge 4$ be an even integer, and let $G$ be a finite solvable group such that its character degree graph $\Delta(G)$ is an $(n-2)$-regular graph on $n$ vertices. Construct a supergraph $\widetilde{\Delta}$ of $\Delta(G)$ by selecting $n_1$ disjoint non-edges of $\Delta(G)$ and adding them as edges, where $1 \le n_1 < \frac{n}{2}$. Then the field extension over $\mathbb{Q}$ generated by the eigenvalues of $\widetilde{\Delta}$ has degree $2$.
\end{cor}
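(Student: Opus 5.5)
By Theorem~\ref{thm:char_poly_deltaG}, and more precisely the structural step in its proof, $\Delta(G)\cong K_{2,2,\dots,2}$ with $k=\frac n2$ parts, so its non-edges are exactly the $k$ pairs $V_1,\dots,V_k$. My plan is to first describe $\widetilde\Delta$ explicitly. Since the chosen non-edges are disjoint, they are exactly $n_1$ of these pairs, say $V_1,\dots,V_{n_1}$. After adding them, the $2n_1$ vertices of $U:=V_1\cup\dots\cup V_{n_1}$ become universal. The remaining $2m$ vertices, with $m=k-n_1\ge 1$, induce $K_{2m}$ minus a perfect matching. Hence
\[
\widetilde\Delta\cong K_2[K_{2n_1},\,K_{2,\dots,2}],
\]
where the second graph is the cocktail-party graph on $2m$ vertices. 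The two pieces are $(2n_1-1)$-regular and $(2m-2)$-regular respectively.

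Next I would apply Theorem~\ref{1.1} (equivalently, an equitable-partition argument) to this two-part join. The non-main eigenvalues come from the pieces. The piece $K_{2n_1}$ contributes $-1$ with multiplicity $2n_1-1$. The cocktail-party piece contributes $0$ with multiplicity $m$ and $-2$ with multiplicity $m-1$, by the computation already carried out in Theorem~\ref{thm:char_poly_deltaG}. The remaining two eigenvalues are those of
\[
\widehat C=\begin{pmatrix}2n_1-1 & \sqrt{4n_1m}\\ \sqrt{4n_1m} & 2m-2\end{pmatrix}.
\]
Using $n=2n_1+2m$, its characteristic polynomial is
\[
q(\lambda)=\lambda^2-(n-3)\lambda-(4n_1+2m-2).
\]
So the splitting field of $\chi_{\widetilde\Delta}$ is the splitting field of $q$, which is either $\mathbb{Q}$ or a quadratic field.

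The main point is to show that $q$ is irreducible, i.e.\ that its discriminant is not a perfect square. Simplifying with $2m=n-2n_1$ gives
\[
D=(n-3)^2+4(4n_1+2m-2)=(n-1)^2+8n_1.
\]
Since $n$ is even, $D$ is odd. Since $n_1\ge1$, we have $D>(n-1)^2$. If $D$ were a square, it would equal $(n-1+2j)^2$ for some $j\ge1$, which forces
\[
j(n-1+j)=2n_1 .
\]
But $j(n-1+j)\ge n>2n_1$ because $n_1<\frac n2$, a contradiction. Hence $\sqrt D\notin\mathbb{Q}$, and the field generated by the eigenvalues is $\mathbb{Q}(\sqrt{(n-1)^2+8n_1})$, of degree $2$. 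The step needing the most care is the exact identification of $\widehat C$ and the simplification of $D$; the non-square argument itself is a short squeeze between consecutive odd squares.
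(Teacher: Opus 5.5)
Your proposal is correct and follows essentially the same route as the paper: you write $\widetilde\Delta$ as the join of $K_{2n_1}$ with the cocktail-party graph, apply Theorem~\ref{1.1} to reduce to the $2\times 2$ quotient matrix with discriminant $(n-1)^2+8n_1$, and exclude a perfect square via the same factorization $j(n-1+j)=2n_1<n$ (your $j$ is the paper's $a$). One small point: when $m=1$ the cocktail-party piece is $E_2$, which lies outside the $n\ge 4$ hypothesis of Theorem~\ref{thm:char_poly_deltaG}, but its contribution of a single eigenvalue $0$ is immediate, so the argument is unaffected.
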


\begin{proof}
Since $\Delta(G)$ is $(n-2)$-regular on $n$ vertices, its complement is $1$-regular. Because $n$ is even, the complement must be a perfect matching
\[
M=\bigl\{\{u_1,v_1\},\dots,\{u_{n/2},v_{n/2}\}\bigr\}.
\]
Hence
\[
\Delta(G)=K_n \setminus M,
\]
the complete graph on $n$ vertices with the edges of $M$ removed. 

Fix $n_1$ with $1\le n_1<\frac{n}{2}$, and let
\[
S=\{u_1,v_1,\dots,u_{n_1},v_{n_1}\}, \qquad |S|=2n_1.
\]
The only missing edges inside $S$ are $\{u_i,v_i\}$ for $1\le i \le n_1$. Adding these edges produces a graph $\widetilde{\Delta}$ in which:
\[
\widetilde{\Delta}[S]\cong K_{2n_1},
\]
and the induced subgraph on the remaining $m=n-2n_1$ vertices is
\[
G_1 = K_m \setminus M',
\]
where $M'$ is the restriction of the perfect matching $M$ to $V\setminus S$. This is the cocktail party graph on $m$ vertices (i.e., the complete graph minus a perfect matching). Moreover, every vertex of $S$ is adjacent to every vertex of $V\setminus S$. Thus
\[
\widetilde{\Delta}=G_1 \vee K_{2n_1}.
\]

\medskip
\noindent\textbf{Spectra of the components.}
The spectrum of $K_{2n_1}$ is
\[
\{\,2n_1-1,\;(-1)^{(2n_1-1)}\,\},
\]
and the spectrum of the cocktail party graph $G_1$ on $m$ vertices is
\[
\{\,m-2,\;0^{(m/2)},\;(-2)^{(m/2-1)}\,\}.
\]

\medskip
\noindent\textbf{Join spectrum.}
By the standard spectral formula for the join of two regular graphs (see Theorem~\ref{1.1}), the spectrum of $\widetilde{\Delta}$ consists of:
\begin{itemize}
    \item all eigenvalues of $G_1$ except its largest eigenvalue $m-2$, namely $0^{(m/2)}$ and $(-2)^{(m/2-1)}$,
    \item all eigenvalues of $K_{2n_1}$ except its largest eigenvalue $2n_1-1$, namely $(-1)^{(2n_1-1)}$,
    \item the two eigenvalues of the coupling matrix
    \[
    \widehat{C}=
    \begin{pmatrix}
        m-2 & \sqrt{m\cdot 2n_1} \\
        \sqrt{m\cdot 2n_1} & 2n_1-1
    \end{pmatrix}.
    \]
\end{itemize}
All eigenvalues listed above, except the last two, are integers and hence lie in $\mathbb{Q}$.

A direct computation shows that the characteristic polynomial of $\widehat{C}$ is
\[
\chi(\lambda)=\lambda^2-(n-3)\lambda-(n+2n_1-2).
\]

\medskip
\noindent\textbf{Discriminant calculation.}
The discriminant of $\chi(\lambda)$ is
\[
D=(n-3)^2+4(n+2n_1-2)
    =(n-1)^2+8n_1.
\]
Suppose, for contradiction, that $D=k^2$ for some integer $k$. Then
\[
k^2-(n-1)^2 = 8n_1 \quad\Longrightarrow\quad (k-(n-1))(k+(n-1)) = 8n_1.
\]
Since $n$ is even, $n-1$ is odd, and $D \equiv 1 \pmod{8}$, so $k$ is odd. Hence both factors on the left are even. Write $k-(n-1)=2a$ and $k+(n-1)=2b$ with integers $b>a>0$. Then
\[
4ab = 8n_1 \;\Rightarrow\; ab = 2n_1, \qquad\text{and}\qquad b-a = n-1.
\]
Substituting $b = a + n - 1$ gives
\[
a(a + n - 1) = 2n_1.
\]
But $a \ge 1$ implies $a(a + n - 1) \ge n$, while $n_1 < \frac{n}{2}$ implies $2n_1 < n$. This contradiction shows that $D$ is not a perfect square.

\medskip
\noindent\textbf{Conclusion.}
Therefore, $\chi(\lambda)$ is irreducible over $\mathbb{Q}$, and its roots generate the quadratic field $\mathbb{Q}(\sqrt{D})$. Since all other eigenvalues of $\widetilde{\Delta}$ are rational, the field extension of $\mathbb{Q}$ generated by the full spectrum of $\widetilde{\Delta}$ is exactly $\mathbb{Q}(\sqrt{D})$, which has degree $2$ over $\mathbb{Q}$. \hfill $\square$
\end{proof}

\begin{exam}\label{ex:4vertex}
We illustrate Corollary~\ref{cor:quadratic} with the smallest nontrivial example. Consider a character degree graph $\Delta$ on four vertices and a supergraph $\widetilde{\Delta}$ obtained by adding a single missing edge. Figure~\ref{fig:four-vertex-cdg}(A) depicts the graph $\Delta$, which arises in solvable group constructions discussed by Lewis~\cite[p.~184]{Lewis2008}, while Figure~\ref{fig:four-vertex-cdg}(B) shows the corresponding supergraph $\widetilde{\Delta}$.

\begin{figure}[ht]
    \centering 
    \begin{subfigure}{.45\textwidth} 
        \centering 
        \begin{tikzpicture} 
            \draw[fill=black] (0,0) circle (2pt); 
            \draw[fill=black] (2,0) circle (2pt); 
            \draw[fill=black] (0,2) circle (2pt); 
            \draw[fill=black] (2,2) circle (2pt); 
            \draw[thick] (0,0)--(2,0)--(2,2)--(0,2)--(0,0);
        \end{tikzpicture} 
        \caption{$\Delta$}
        \label{fig:delta}
    \end{subfigure}
    \hfill
    \begin{subfigure}{.45\textwidth} 
        \centering 
        \begin{tikzpicture} 
            \draw[fill=black] (0,0) circle (2pt); 
            \draw[fill=black] (2,0) circle (2pt); 
            \draw[fill=black] (0,2) circle (2pt); 
            \draw[fill=black] (2,2) circle (2pt); 
            \draw[thick] (0,0)--(2,0)--(2,2)--(0,2)--(0,0);
            \draw[thick] (0,2)--(2,0);
        \end{tikzpicture} 
        \caption{$\widetilde{\Delta}$}
        \label{fig:delta-tilde}
    \end{subfigure}
    \caption{A character degree graph on four vertices and a supergraph}
    \label{fig:four-vertex-cdg}
\end{figure}

The graph $\widetilde{\Delta}$ admits an equitable partition with two cells, namely $G_1 = 2K_1$ and $G_2 = K_2$. With respect to this partition, the associated quotient matrix is
\[
Q =
\begin{pmatrix}
0 & 2 \\
2 & 1
\end{pmatrix}.
\]
Its characteristic polynomial is $\lambda^2 - \lambda - 4$, whose roots are
\[
\lambda = \frac{1 \pm \sqrt{17}}{2}.
\]

The remaining eigenvalues of $\widetilde{\Delta}$ arise from the internal structure of the cells: the component $G_1 = 2K_1$ contributes the eigenvalue $0$, while $G_2 = K_2$ contributes the eigenvalue $-1$. Hence the full adjacency spectrum of $\widetilde{\Delta}$ is
\[
\left\{ \frac{1+\sqrt{17}}{2},\; \frac{1-\sqrt{17}}{2},\; 0,\; -1 \right\}.
\]

The two irrational eigenvalues generate the quadratic field $\mathbb{Q}(\sqrt{17})$, whereas the remaining eigenvalues are rational. Consequently, the field generated by the full spectrum of $\widetilde{\Delta}$ is $\mathbb{Q}(\sqrt{17})$, a degree-$2$ extension of $\mathbb{Q}$, in complete agreement with Corollary~\ref{cor:quadratic}.
\end{exam}

\section{The Quotient Matrix and Characteristic Polynomial of a Character Degree Graph of Diameter \(3\)}
\label{sec:quotient-matrix}

Let \(G\) be a finite solvable group such that its character degree graph \(\Delta(G)\) has diameter \(3\), and suppose that \(\Delta(G)\) admits Lewis' partition
\[
\rho(G)=\rho_1\sqcup\rho_2\sqcup\rho_3\sqcup\rho_4.
\]
Assume that every \(\rho_i\) induces a complete subgraph, every vertex of \(\rho_i\) is adjacent to every vertex of \(\rho_{i+1}\) for \(1\leq i\leq 3\), and there are no edges between nonconsecutive parts. In particular, the edges joining \(\rho_2\) and \(\rho_3\) form a complete bipartite graph with bipartition \((\rho_2,\rho_3)\).

Set
\[
|\rho_1|=r_1,\qquad |\rho_2|=r_2,\qquad |\rho_3|=r_3,\qquad |\rho_4|=r_4.
\]
Using the reverse order \(\rho_4,\rho_3,\rho_2,\rho_1\), the graph is the
\(H\)-join
\[
\Delta(G)
\cong
P_4[K_{r_4},K_{r_3},K_{r_2},K_{r_1}].
\]

The spectrum of the associated graph is given by
\[
\sigma(\Delta(G))
=
\left\{-1^{(r_1+r_2+r_3+r_4-4)}\right\}
\cup
\sigma(\widehat C),
\]
where \(\widehat C\) is the \(4\times4\) matrix
\[
\widehat C=
\begin{pmatrix}
r_4-1 & \sqrt{r_4r_3} & 0 & 0\\
\sqrt{r_4r_3} & r_3-1 & \sqrt{r_3r_2} & 0\\
0 & \sqrt{r_3r_2} & r_2-1 & \sqrt{r_2r_1}\\
0 & 0 & \sqrt{r_2r_1} & r_1-1
\end{pmatrix}.
\]
Since \(\widehat C\) is real symmetric, all its eigenvalues are real.

The characteristic polynomial of \(\widehat C\) is the quartic polynomial studied in the next section.

\section{Irreducibility Criterion}
\label{sec:irreducibility-criterion}

\begin{thm}[Irreducibility in the diameter-three case]
\label{thm:diam3-irreducibility}
Under the preceding hypotheses, let
\[
p(\lambda)=\det(\lambda I_4-\widehat C)
=\lambda^4+c_3\lambda^3+c_2\lambda^2+c_1\lambda+c_0,
\]
where
\[
c_3=4-r_1-r_2-r_3-r_4,
\]
\[
c_2=
r_1r_3+r_1r_4+r_2r_4
-3r_1-3r_2-3r_3-3r_4+6,
\]
\[
\begin{aligned}
c_1={}&2r_1r_3+2r_1r_4+2r_2r_4
+r_1r_2r_3+r_2r_3r_4\\
&-3r_1-3r_2-3r_3-3r_4+4,
\end{aligned}
\]
and
\[
\begin{aligned}
c_0={}&r_1r_3+r_1r_4+r_2r_4
+r_1r_2r_3+r_2r_3r_4-r_1r_2r_3r_4\\
&-r_1-r_2-r_3-r_4+1.
\end{aligned}
\]
Then \(p(\lambda)\) is irreducible over \(\mathbb Q\) if and only if both of the following conditions hold:

\begin{enumerate}[label=\textup{(\roman*)}]
    \item
    \[
    c_0 \neq 0
    \]
    and
    \[
    p(e)\neq 0
    \qquad
    \text{for every } e\in\mathbb{Z} \text{ such that } e\mid c_0.
    \]
    \item There do not exist integers \(a,b,c,d\) satisfying
    \[
    \begin{aligned}
        a+c     &= c_3,\\
        ac+b+d  &= c_2,\\
        ad+bc   &= c_1,\\
        bd      &= c_0.
    \end{aligned}
    \]
    Equivalently, \(p(\lambda)\) cannot be expressed as
    \[
    p(\lambda)
    =
    (\lambda^2+a\lambda+b)
    (\lambda^2+c\lambda+d),
    \qquad a,b,c,d\in\mathbb{Z}.
    \]
\end{enumerate}

In particular, if \(r_1=r_2=1\), then
\[
\begin{aligned}
c_3&=2-r_3-r_4,\\
c_2&=-2r_3-r_4,\\
c_1&=r_3r_4+r_4-2,\\
c_0&=r_3+r_4-1,
\end{aligned}
\]
and hence
\[
\begin{aligned}
p(\lambda)
={}&\lambda^4+(2-r_3-r_4)\lambda^3-(2r_3+r_4)\lambda^2\\
&+(r_3r_4+r_4-2)\lambda+(r_3+r_4-1).
\end{aligned}
\]
In this special case, condition \textup{(ii)} is equivalent to the nonexistence of integers \(\alpha,\beta,\gamma,\delta\) satisfying
\[
\begin{aligned}
\alpha+\gamma&=2-r_3-r_4,\\
\alpha\gamma+\beta+\delta&=-2r_3-r_4,\\
\alpha\delta+\beta\gamma&=r_3r_4+r_4-2,\\
\beta\delta&=r_3+r_4-1.
\end{aligned}
\]

Moreover,
\[
p(-1)=-r_1r_2r_3r_4<0.
\]
Thus \(-1\) is never a root of \(p(\lambda)\). If \(r_1=2\), then
\[
p(1)=4r_2(r_3+r_4-2).
\]
In particular, if \(r_2\geq 1\), \(r_3\geq 3\), and \(r_4\geq 1\), then \(p(1)>0\), so \(1\) is not a root.
\end{thm}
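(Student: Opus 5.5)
The plan has three parts: compute $p(\lambda)$ explicitly so that it is visibly a monic polynomial in $\mathbb{Z}[\lambda]$, reduce irreducibility over $\mathbb{Q}$ to a finite list of integer factorization patterns via Gauss's Lemma (Theorem~\ref{1.7}), and then verify the special evaluations by direct substitution.

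\textbf{Step 1: the coefficients.} Since $\widehat C$ is symmetric tridiagonal, I will expand $\det(\lambda I_4-\widehat C)$ with the continuant recurrence. Write $D_k$ for the leading principal $k\times k$ minor of $\lambda I_4-\widehat C$, taken in the order $\rho_4,\rho_3,\rho_2,\rho_1$. Then
\[
D_k=(\lambda-r_{5-k}+1)D_{k-1}-r_{5-k}r_{6-k}D_{k-2}.
\]
The off-diagonal entries enter only through their squares $\sqrt{r_ir_{i+1}}^{\,2}=r_ir_{i+1}$. Hence $p(\lambda)$ is monic with integer coefficients. Expanding $D_4$ and collecting powers of $\lambda$ gives $c_3,c_2,c_1,c_0$. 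Setting $r_1=r_2=1$ in these expressions gives the specialized coefficients.

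I expect this bookkeeping to be the main obstacle. It is easy to drop a cross term, so I will cross-check the expansion against the two evaluations below, and against a small case such as $r_i=1$ for all $i$. In that case the graph is $P_4$, whose characteristic polynomial is $\lambda^4-3\lambda^2+1$.

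\textbf{Step 2: the irreducibility criterion.} Because $p$ is monic in $\mathbb{Z}[\lambda]$, it is primitive, and by Gauss's Lemma (Theorem~\ref{1.7}) reducibility over $\mathbb{Q}$ is the same as reducibility over $\mathbb{Z}$. A monic quartic over $\mathbb{Z}$ is reducible exactly when it has a monic linear factor or a factorization into two monic quadratics. I will treat the two patterns separately.

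\emph{Linear factor.} If $c_0=0$, then $0$ is a root, which accounts for the requirement $c_0\neq 0$. If $c_0\neq0$, the rational root theorem forces any rational root to be an integer $e$ with $e\mid c_0$, which is condition~(i).

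\emph{Quadratic factors.} Expanding $(\lambda^2+a\lambda+b)(\lambda^2+c\lambda+d)$ and comparing coefficients gives exactly the system in~(ii).

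Therefore $p$ is reducible if and only if (i) or (ii) fails. In the case $r_1=r_2=1$, condition~(ii) is the same system with the specialized coefficients substituted.

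\textbf{Step 3: the evaluation at $-1$.} I will avoid the polynomial and compute directly:
\[
p(-1)=\det(-I-\widehat C)=\det(\widehat C+I).
\]
Let $D=\operatorname{diag}(r_4,r_3,r_2,r_1)$. Then
\[
\widehat C+I=D^{1/2}MD^{1/2},
\]
where $M$ is the $4\times4$ tridiagonal matrix with all diagonal and off-diagonal entries equal to $1$. Its leading minors satisfy $d_k=d_{k-1}-d_{k-2}$, giving $1,0,-1,-1$, so $\det M=-1$. Hence
\[
p(-1)=-r_1r_2r_3r_4<0,
\]
and in particular $-1$ is never a root.

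\textbf{Step 4: the evaluation at $1$.} For $r_1=2$, I will substitute $\lambda=1$ into the Step~1 formula, or equivalently evaluate $\det(I-\widehat C)$ by the recurrence, and simplify to $4r_2(r_3+r_4-2)$. When $r_2\ge1$ and $r_3+r_4\ge 4$, this value is positive, so $1$ is not a root.
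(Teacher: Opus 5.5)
Your proposal is correct. Its core is exactly the paper's argument: Gauss's Lemma, the split into $1+3$ and $2+2$ factorization patterns, the rational root theorem for~(i), and coefficient comparison for~(ii).

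The differences are in the supporting computations. The paper's proof takes the formulas for $c_3,\dots,c_0$ as given, so integrality of $p$ is never actually justified. It then obtains $p(-1)$ and $p(1)$ by ``direct substitution'' into those formulas. Your continuant recurrence genuinely establishes $p\in\mathbb{Z}[\lambda]$ and the coefficients. Your factorization $\widehat C+I=D^{1/2}MD^{1/2}$ gives $p(-1)=\det D\cdot\det M$ without using the expansion, so it is an honest independent check on it.

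To tame the bookkeeping you flag as the main obstacle, put $\mu=\lambda+1$. The diagonal entries of $\lambda I_4-\widehat C$ become $\mu-r_i$, and the recurrence yields
\[
p=\mu^4-e_1\mu^3+(r_1r_3+r_1r_4+r_2r_4)\mu^2+(r_1r_2r_3+r_2r_3r_4)\mu-r_1r_2r_3r_4,\qquad e_1=r_1+r_2+r_3+r_4.
\]
Re-expanding in $\lambda$ gives the stated $c_i$. The value $p(-1)$ is the constant term in $\mu$. Setting $\mu=2$ and $r_1=2$ gives $p(1)=4r_2(r_3+r_4-2)$ immediately.
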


\begin{proof}
Since \(p(\lambda)\) is monic and belongs to \(\mathbb Z[\lambda]\), Gauss's lemma implies that \(p(\lambda)\) is reducible over \(\mathbb Q\) if and only if it admits a nontrivial factorization into monic polynomials in \(\mathbb Z[\lambda]\). Because \(\deg p=4\), every such factorization has degree pattern \(1+3\) or \(2+2\).

\medskip
\noindent\textbf{Case 1: A linear--cubic factorization.}

If \(p(\lambda)\) has a linear factor over \(\mathbb Q\), then it has a rational root. Since \(p(\lambda)\) is monic with integer coefficients, every rational root is an integer. If \(e=0\) is a root, then \(c_0=p(0)=0\). If \(e\neq 0\) is a root, the rational root theorem gives \(e\mid c_0\).

Conversely, if \(c_0=0\), then \(p(0)=0\), and hence \(\lambda\) divides \(p(\lambda)\). If \(c_0\neq 0\) and an integer divisor \(e\mid c_0\) satisfies \(p(e)=0\), then the factor theorem shows that \(\lambda-e\) divides \(p(\lambda)\). Therefore, \(p(\lambda)\) has no linear factor over \(\mathbb Q\) if and only if condition \textup{(i)} holds.

\medskip
\noindent\textbf{Case 2: A quadratic--quadratic factorization.}

Suppose that \(p(\lambda)\) is a product of two quadratic polynomials over \(\mathbb Q\). Since \(p(\lambda)\) is monic and primitive, Gauss's lemma allows the factorization to be written as
\[
p(\lambda)
=
(\lambda^2+a\lambda+b)(\lambda^2+c\lambda+d),
\qquad a,b,c,d\in\mathbb Z.
\]
Expanding gives
\[
p(\lambda)
=
\lambda^4+(a+c)\lambda^3+(ac+b+d)\lambda^2+(ad+bc)\lambda+bd.
\]
Comparison of coefficients gives
\[
a+c=c_3,\qquad
ac+b+d=c_2,\qquad
ad+bc=c_1,\qquad
bd=c_0.
\]
Thus every quadratic--quadratic factorization produces an integer solution of the system in condition \textup{(ii)}.

Conversely, every integer solution of that system gives the displayed quadratic--quadratic factorization by direct expansion. Hence no such factorization exists if and only if condition \textup{(ii)} holds.

The two possible degree patterns have now been exhausted. Therefore, \(p(\lambda)\) is irreducible over \(\mathbb Q\) if and only if conditions \textup{(i)} and \textup{(ii)} both hold.

Substituting \(r_1=r_2=1\) into the general coefficient formulas gives
\[
\begin{aligned}
c_3&=2-r_3-r_4,\\
c_2&=-2r_3-r_4,\\
c_1&=r_3r_4+r_4-2,\\
c_0&=r_3+r_4-1,
\end{aligned}
\]
which proves the asserted specialization.

Finally, direct substitution into the general polynomial gives
\[
p(-1)=-r_1r_2r_3r_4.
\]
If \(r_1=2\), direct substitution and simplification give
\[
p(1)=4r_2(r_3+r_4-2).
\]
The stated sign conclusions follow from the assumed bounds. \hfill $\square$
\end{proof}

\section{Examples}
\label{sec:examples}

The irreducibility statements in this section are consequences of Theorem~\ref{thm:diam3-irreducibility}; in particular, all specialized polynomials below use the constant term
\[
c_0=r_3+r_4-1
\]
for \(r_1=r_2=1\).

\begin{exam}\label{ExPrime}
Let \(q\) be any odd prime number. Following the notation of Theorem~\ref{thm:diam3-irreducibility}, if
\[
r_1=r_2=1,\qquad r_3=q,\qquad r_4=1,
\]
then \(p(\lambda)\) is irreducible over \(\mathbb Q\).
\end{exam}

\begin{proof}
Since \(q\) is odd, we have \(q\geq 3\). For the parameters
\[
r_1=r_2=1,\qquad r_3=q,\qquad r_4=1,
\]
the polynomial in Theorem~\ref{thm:diam3-irreducibility} becomes
\[
p(\lambda)
=
\lambda^4+(1-q)\lambda^3-(2q+1)\lambda^2+(q-1)\lambda+q.
\]

We verify conditions \textup{(i)} and \textup{(ii)} of
Theorem~\ref{thm:diam3-irreducibility}.

\medskip
\noindent\textbf{Condition \textup{(i): no integer root.}}

The constant term is \(c_0=q\neq 0\), so the only possible integer roots are
\[
\pm 1,\qquad \pm q,
\]
all of which are odd. Reducing \(p(d)\) modulo \(2\), we get
\[
p(d)\equiv d^4+d^2+1 \pmod{2},
\]
because \(1-q\) and \(q-1\) are even, while \(-(2q+1)\) and \(q\) are odd. Since \(d\) is odd, \(d^2\equiv d^4\equiv 1\pmod{2}\), and therefore
\[
p(d)\equiv 1+1+1\equiv 1\pmod{2}.
\]
Thus \(p(d)\neq 0\) for every integer divisor \(d\) of \(q\). Hence condition \textup{(i)} holds.

\medskip
\noindent\textbf{Condition \textup{(ii): no quadratic--quadratic factorization.}}

Suppose, for the sake of contradiction, that
\[
p(\lambda)
=
(\lambda^2+\alpha\lambda+\beta)
(\lambda^2+\gamma\lambda+\delta)
\]
for some integers \(\alpha,\beta,\gamma,\delta\). Comparing coefficients gives
\[
\begin{aligned}
\alpha+\gamma&=1-q,\\
\alpha\gamma+\beta+\delta&=-(2q+1),\\
\alpha\delta+\beta\gamma&=q-1,\\
\beta\delta&=q.
\end{aligned}
\]

Since \(q\) is prime, the possible ordered pairs \((\beta,\delta)\) are
\[
(1,q),\qquad(q,1),\qquad(-1,-q),\qquad(-q,-1).
\]
In every case we substitute \(\gamma=1-q-\alpha\) into the third equation and then test the second equation.

If \((\beta,\delta)=(1,q)\), then
\[
q\alpha+\gamma=q-1.
\]
Thus
\[
q\alpha+1-q-\alpha=q-1,
\]
so
\[
(q-1)\alpha=2(q-1),
\]
whence \(\alpha=2\) and \(\gamma=-q-1\). But then
\[
\alpha\gamma+\beta+\delta
=
2(-q-1)+1+q
=
-q-1
\neq
-2q-1,
\]
a contradiction.

If \((\beta,\delta)=(q,1)\), then
\[
\alpha+q\gamma=q-1.
\]
Using \(\gamma=1-q-\alpha\), we obtain
\[
\alpha+q(1-q-\alpha)=q-1,
\]
so
\[
(1-q)\alpha=q^2-1.
\]
Hence \(\alpha=-q-1\) and \(\gamma=2\). Again
\[
\alpha\gamma+\beta+\delta
=
(-q-1)(2)+q+1
=
-q-1
\neq
-2q-1,
\]
a contradiction.

If \((\beta,\delta)=(-1,-q)\), then
\[
-q\alpha-\gamma=q-1.
\]
Thus
\[
-q\alpha-(1-q-\alpha)=q-1,
\]
so
\[
(1-q)\alpha=0,
\]
whence \(\alpha=0\) and \(\gamma=1-q\). Then
\[
\alpha\gamma+\beta+\delta
=
0-1-q
=
-q-1
\neq
-2q-1,
\]
a contradiction.

If \((\beta,\delta)=(-q,-1)\), then
\[
-\alpha-q\gamma=q-1.
\]
Using \(\gamma=1-q-\alpha\), we obtain
\[
-\alpha-q(1-q-\alpha)=q-1,
\]
so \(\alpha=1-q\) and \(\gamma=0\). Then
\[
\alpha\gamma+\beta+\delta
=
0-q-1
=
-q-1
\neq
-2q-1,
\]
a contradiction.

Thus no such integers \(\alpha,\beta,\gamma,\delta\) exist, and condition \textup{(ii)} holds. Since both conditions of Theorem~\ref{thm:diam3-irreducibility} are satisfied, \(p(\lambda)\) is irreducible over \(\mathbb Q\). \hfill $\square$
\end{proof}

See Figures~\ref{FigEx3} and~\ref{FigEx5} below for two examples of such graphs. As such, these are cataloged in \cite[p.~629]{Lewis2002} and \cite[p.~14]{Lewis2026}, respectively.

\begin{figure}[htb]
\centering
\begin{tikzpicture}[scale=3]
\node[inner sep=0pt] (a1) at (0,.5) {$\bullet$};
\node[inner sep=0pt] (a2) at (.5,.5) {$\bullet$};
\node[inner sep=0pt] (a3) at (1,.5) {$\bullet$};
\node[inner sep=0pt] (a4) at (1.5,1) {$\bullet$};
\node[inner sep=0pt] (a5) at (1.5,0) {$\bullet$};
\node[inner sep=0pt] (a6) at (2,.5) {$\bullet$};

\draw
(a1)--(a2)
(a2)--(a3)
(a2)--(a4)
(a2)--(a5)
(a3)--(a4)
(a3)--(a5)
(a3)--(a6)
(a4)--(a5)
(a4)--(a6)
(a5)--(a6);
\end{tikzpicture}
\caption{Example with \(r_3=3\) and \(r_4=1\).}
\label{FigEx3}
\end{figure}

\begin{figure}[htb]
\centering
\begin{tikzpicture}[scale=3]
\node[inner sep=0pt] (c1) at (0,.5) {$\bullet$};
\node[inner sep=0pt] (c2) at (.5,.5) {$\bullet$};
\node[inner sep=0pt] (c3) at (1,.5) {$\bullet$};
\node[inner sep=0pt] (c4) at (1.5,1) {$\bullet$};
\node[inner sep=0pt] (c5) at (1.5,0) {$\bullet$};
\node[inner sep=0pt] (c6) at (2,1) {$\bullet$};
\node[inner sep=0pt] (c7) at (2,0) {$\bullet$};
\node[inner sep=0pt] (c8) at (2.5,.5) {$\bullet$};

\draw
(c1)--(c2)
(c2)--(c3)
(c2)--(c4)
(c2)--(c5)
(c2)--(c6)
(c2)--(c7)
(c3)--(c4)
(c3)--(c5)
(c3)--(c6)
(c3)--(c7)
(c3)--(c8)
(c4)--(c5)
(c4)--(c6)
(c4)--(c7)
(c4)--(c8)
(c5)--(c6)
(c5)--(c7)
(c5)--(c8)
(c6)--(c7)
(c6)--(c8)
(c7)--(c8);
\end{tikzpicture}
\caption{Example with \(r_3=5\) and \(r_4=1\).}
\label{FigEx5}
\end{figure}

The family in Example~\ref{ExPrime} has \(r_4=1\). Irreducible examples also occur when both \(r_3\) and \(r_4\) are odd, but then one must still verify the quadratic-factor condition in Theorem~\ref{thm:diam3-irreducibility} separately. Here is one such example.

\begin{exam}\label{ExOdd}
If
\[
r_1=r_2=1,\qquad r_3=3,\qquad r_4=3,
\]
then \(p(\lambda)\) is irreducible over \(\mathbb Q\).
\end{exam}

\begin{proof}
For these parameters, the polynomial in Theorem~\ref{thm:diam3-irreducibility} is
\[
p(\lambda)
=
\lambda^4-4\lambda^3-9\lambda^2+10\lambda+5.
\]

We again verify conditions \textup{(i)} and \textup{(ii)} of
Theorem~\ref{thm:diam3-irreducibility}.

\medskip
\noindent\textbf{Condition \textup{(i): no integer root.}}

The constant term is \(5\), so its only integer divisors are
\[
\pm 1,\qquad \pm 5,
\]
all of which are odd. Reducing modulo \(2\), we have
\[
p(d)\equiv d^4+d^2+1\pmod{2}.
\]
For every odd integer \(d\),
\[
d^2\equiv d^4\equiv 1\pmod{2},
\]
so
\[
p(d)\equiv 1+1+1\equiv 1\pmod{2}.
\]
Thus \(p(d)\neq 0\) for every integer divisor \(d\) of \(5\).

\medskip
\noindent\textbf{Condition \textup{(ii): no quadratic--quadratic factorization.}}

Suppose
\[
p(\lambda)
=
(\lambda^2+\alpha\lambda+\beta)
(\lambda^2+\gamma\lambda+\delta)
\]
with \(\alpha,\beta,\gamma,\delta\in\mathbb Z\). Comparing coefficients gives
\[
\begin{aligned}
\alpha+\gamma&=-4,\\
\alpha\gamma+\beta+\delta&=-9,\\
\alpha\delta+\beta\gamma&=10,\\
\beta\delta&=5.
\end{aligned}
\]

Since \(5\) is prime, the only possible ordered pairs \((\beta,\delta)\) are
\[
(1,5),\quad (5,1),\quad (-1,-5),\quad (-5,-1).
\]
We check each case, substituting \(\gamma=-4-\alpha\) into the third
equation.

If \((\beta,\delta)=(1,5)\), then
\[
5\alpha+\gamma=10.
\]
Thus
\[
5\alpha-4-\alpha=10,
\]
so
\[
4\alpha=14,
\]
which has no integer solution.

If \((\beta,\delta)=(5,1)\), then
\[
\alpha+5\gamma=10.
\]
Thus
\[
\alpha+5(-4-\alpha)=10,
\]
so
\[
-4\alpha=30,
\]
which has no integer solution.

If \((\beta,\delta)=(-1,-5)\), then
\[
-5\alpha-\gamma=10.
\]
Thus
\[
-5\alpha-(-4-\alpha)=10,
\]
so
\[
-4\alpha=6,
\]
which has no integer solution.

If \((\beta,\delta)=(-5,-1)\), then
\[
-\alpha-5\gamma=10.
\]
Thus
\[
-\alpha-5(-4-\alpha)=10,
\]
so
\[
4\alpha=-10,
\]
which has no integer solution.

Thus no quadratic--quadratic factorization exists. Therefore condition \textup{(ii)} holds, and \(p(\lambda)\) is irreducible over \(\mathbb Q\). \hfill $\square$
\end{proof}

Figure~\ref{FigEx33} graphically illustrates this example; see \cite[p.~14]{Lewis2026}.

\begin{figure}[htb]
\centering
\begin{tikzpicture}[scale=3]
\node[inner sep=0pt] (d1) at (0,.5) {$\bullet$};
\node[inner sep=0pt] (d2) at (.5,.5) {$\bullet$};
\node[inner sep=0pt] (d3) at (1,.5) {$\bullet$};
\node[inner sep=0pt] (d4) at (1.5,1) {$\bullet$};
\node[inner sep=0pt] (d5) at (1.5,0) {$\bullet$};
\node[inner sep=0pt] (d6) at (2,1) {$\bullet$};
\node[inner sep=0pt] (d7) at (2,0) {$\bullet$};
\node[inner sep=0pt] (d8) at (2.5,.5) {$\bullet$};

\draw
(d1)--(d2)
(d2)--(d3)
(d2)--(d4)
(d2)--(d5)
(d3)--(d4)
(d3)--(d5)
(d3)--(d6)
(d3)--(d7)
(d3)--(d8)
(d4)--(d5)
(d4)--(d6)
(d4)--(d7)
(d4)--(d8)
(d5)--(d6)
(d5)--(d7)
(d5)--(d8)
(d6)--(d7)
(d6)--(d8)
(d7)--(d8);
\end{tikzpicture}
\caption{Example with \(r_3=3\) and \(r_4=3\).}
\label{FigEx33}
\end{figure}

\begin{rem}\label{rem:parity}
There is a useful parity observation for odd \(r_3\) and \(r_4\). Suppose \(r_1=r_2=1\) and both \(r_3\) and \(r_4\) are odd. Then the specialized polynomial in Theorem~\ref{thm:diam3-irreducibility} is
\[
p(\lambda)
=
\lambda^4+(2-r_3-r_4)\lambda^3
-(2r_3+r_4)\lambda^2
+(r_3r_4+r_4-2)\lambda
+(r_3+r_4-1).
\]
The constant term
\[
r_3+r_4-1
\]
is odd, so every integer divisor \(d\) of the constant term is odd. Moreover, the coefficients of \(\lambda^3\) and \(\lambda\) are even, while the coefficients of \(\lambda^2\) and the constant term are odd. Hence, for every odd integer \(d\),
\[
p(d)\equiv d^4+d^2+1\equiv 1\pmod{2}.
\]
Thus \(p(d)\neq 0\) for every integer divisor \(d\) of the constant term. This proves condition~\textup{(i)} of
Theorem~\ref{thm:diam3-irreducibility}.

However, this parity argument does not by itself prove condition~\textup{(ii)} of Theorem~\ref{thm:diam3-irreducibility}. One must still rule out a quadratic--quadratic factorization. Example~\ref{ExOdd} does this in the concrete case \(r_3=r_4=3\).
\end{rem}

\begin{rem}\label{rem:converse-false}
The converse of the preceding examples is not true. That is, the statement ``If \(p(\lambda)\) is irreducible over \(\mathbb Q\), then
\(r_1=r_2=1\) and \(r_3\) and \(r_4\) are both odd'' is false.

For example, take
\[
r_1=r_2=1,\qquad r_3=4,\qquad r_4=1.
\]
Then Theorem~\ref{thm:diam3-irreducibility} gives
\[
p(\lambda)
=
\lambda^4-3\lambda^3-9\lambda^2+3\lambda+4.
\]
This polynomial is irreducible over \(\mathbb Q\), even though \(r_3=4\) is
even.
\end{rem}

\begin{proof}
We verify conditions \textup{(i)} and \textup{(ii)} of
Theorem~\ref{thm:diam3-irreducibility}.

\medskip
\noindent\textbf{Condition \textup{(i): no linear factor.}}

Here \(c_0=4\), so the possible integer roots are
\[
\pm1,\qquad \pm2,\qquad \pm4.
\]
Direct evaluation gives
\[
p(\pm1)=-4,\qquad p(2)=-34,\qquad p(-2)=2,
\]
and
\[
p(4)=-64,\qquad p(-4)=296.
\]
All of these values are nonzero, so condition \textup{(i)} holds.

\medskip
\noindent\textbf{Condition \textup{(ii): no quadratic--quadratic factorization.}}

Suppose that
\[
p(\lambda)
=
(\lambda^2+\alpha\lambda+\beta)
(\lambda^2+\gamma\lambda+\delta),
\qquad
\alpha,\beta,\gamma,\delta\in\mathbb Z.
\]
Comparing coefficients gives
\[
\begin{aligned}
\alpha+\gamma&=-3,\\
\alpha\gamma+\beta+\delta&=-9,\\
\alpha\delta+\beta\gamma&=3,\\
\beta\delta&=4.
\end{aligned}
\]

Since \(\beta\delta=4\), the possible ordered pairs are
\[
(\beta,\delta)\in
\left\{
(1,4),(4,1),(2,2),
(-1,-4),(-4,-1),(-2,-2)
\right\}.
\]
We examine these possibilities.

If \((\beta,\delta)=(2,2)\), then the third equation gives
\[
2\alpha+2\gamma=3.
\]
This is impossible because the left-hand side is even, whereas the right-hand side is odd.

Similarly, if \((\beta,\delta)=(-2,-2)\), then
\[
-2\alpha-2\gamma=3,
\]
which is again impossible by parity.

If \((\beta,\delta)=(1,4)\), then
\[
4\alpha+\gamma=3.
\]
Together with \(\alpha+\gamma=-3\), this gives
\[
\alpha=2,\qquad \gamma=-5.
\]
However,
\[
\alpha\gamma+\beta+\delta
=
(2)(-5)+1+4
=
-5
\neq
-9.
\]

If \((\beta,\delta)=(4,1)\), then
\[
\alpha+4\gamma=3.
\]
Together with \(\alpha+\gamma=-3\), this gives
\[
\alpha=-5,\qquad \gamma=2.
\]
But
\[
\alpha\gamma+\beta+\delta
=
(-5)(2)+4+1
=
-5
\neq
-9.
\]

If \((\beta,\delta)=(-1,-4)\), then
\[
-4\alpha-\gamma=3.
\]
Together with \(\alpha+\gamma=-3\), this gives
\[
\alpha=0,\qquad \gamma=-3.
\]
Nevertheless,
\[
\alpha\gamma+\beta+\delta
=
(0)(-3)-1-4
=
-5
\neq
-9.
\]

Finally, if \((\beta,\delta)=(-4,-1)\), then
\[
-\alpha-4\gamma=3.
\]
Together with \(\alpha+\gamma=-3\), this gives
\[
\alpha=-3,\qquad \gamma=0.
\]
Again,
\[
\alpha\gamma+\beta+\delta
=
(-3)(0)-4-1
=
-5
\neq
-9.
\]

Thus the coefficient system has no solution in integers. Consequently, \(p(\lambda)\) has no quadratic--quadratic factorization over \(\mathbb Q\). Since it also has no linear factor, \(p(\lambda)\) is irreducible over \(\mathbb Q\). \hfill $\square$
\end{proof}


\begin{thebibliography}{99}


\bibitem{Bissler2019b}
M.~Bissler and J.~Laubacher, 
Classifying families of character degree graphs of solvable groups, 
\textit{Int. J. Group Theory} \textbf{8} (2019), no.~4, 37--46.

\bibitem{Bissler2019a}
M.~Bissler, J.~Laubacher, and M.~L. Lewis, 
Classifying character degree graphs with six vertices, 
\textit{Beitr. Algebra Geom.} \textbf{60} (2019), no.~3, 499--511.

\bibitem{Bissler2025}
M.~Bissler, J.~Laubacher, and M.~L. Lewis, 
A family of graphs that cannot occur as character degree graphs of solvable groups, \textit{Beitr. Algebra Geom.} \textbf{66} (2025), no.~3, 727--738.

\bibitem{Cameron2025}
P.~J. Cameron, G.~Sivanesan, C.~Selvaraj, T.~Tamizh~Chelvam, and J.~Laubacher, 
On the metric dimension of the character degree graph of a solvable group, 
\textit{Commun. Algebra} \textbf{54} (2026), no.~5, 2039--2049.

\bibitem{Cardoso2013a}
D.~M. Cardoso, M.~A. de Freitas, E.~A. Martins, and M.~Robbiano, 
Spectra of graphs obtained by a generalization of the join graph operation, 
\textit{Discrete Math.} \textbf{313} (2013), 733--741.

\bibitem{Cardoso2011}
D.~M.~Cardoso, I.~Gutman, E.~A.~Martins, and M.~Robbiano,
A generalization of Fiedler's lemma and some applications,
\textit{Linear Multilinear Algebra} \textbf{59} (2011), no.~8, 929--942.

\bibitem{Cardoso2013b}
D.~M. Cardoso, E.~A. Martins, M.~Robbiano, and O.~Rojo, 
Eigenvalues of a H-generalized join graph operation constrained by vertex subsets, 
\textit{Linear Algebra Appl.} \textbf{438} (2013), no.~8, 3278--3290.

\bibitem{Cvetkovic1995}
D.~Cvetković, M.~Doob, and H.~Sachs, 
\textit{Spectra of Graphs: Theory and Application}, 
Johann Ambrosius Barth Verlag, Heidelberg-Leipzig, 1995.

\bibitem{Cvetkovic2010}
D.~Cvetković, P.~Rowlinson, and S.~Simić, 
\textit{An Introduction to the Theory of Graph Spectra}, 
Cambridge University Press, Cambridge, 2010.

\bibitem{LJ}
S.~DeGroot, J.~Laubacher, and M.~Medwid, 
On prime character degree graphs occurring within a family of graphs (ii), 
\textit{Comm. Algebra} \textbf{50} (2022), no.~8, 3307--3319.

\bibitem{DummitFoote}
D.~S. Dummit and R.~M. Foote, 
\textit{Abstract Algebra}, 3rd ed., 
John Wiley \& Sons, Hoboken, NJ, 2004.

\bibitem{Ebrahimi2015}
M.~Ebrahimi, A.~Iranmanesh, and M.~A.~Hosseinzadeh, 
Hamiltonian character graphs, 
\textit{J. Algebra} \textbf{428} (2015), 54--66.

\bibitem{Ebrahimi2023}
M.~Ebrahimi, M.~Khatami, and Z.~Mirzaei, 
Regular character-graphs whose eigenvalues are greater than or equal to $-2$, 
\textit{Discrete Math.} \textbf{346} (2023), no.~1, 113137.

\bibitem{Fiedler1974}
M.~Fiedler, 
Eigenvalues of nonnegative symmetric matrices, 
\textit{Linear Algebra Appl.} \textbf{9} (1974), 119--142.

\bibitem{Godsil2001}
C.~Godsil and G.~Royle, 
\textit{Algebraic Graph Theory}, 
Springer-Verlag, New York, 2001.

\bibitem{Harary1974}
F.~Harary and A.~J.~Schwenk, 
Which graphs have integral spectra?, 
in \textit{Graph Theory and Applications}, 
Springer, Berlin, 1974, pp.~45--51.

\bibitem{Hafezieh2021}
R.~Hafezieh, M.~A. Hosseinzadeh, S.~Hossein-zadeh, and A.~Iranmanesh, 
On cut vertices and eigenvalues of character graphs of solvable groups, 
\textit{Discrete Appl. Math.} \textbf{303} (2021), 86--93.

\bibitem{LM}
J.~Laubacher and M.~Medwid, 
On prime character degree graphs occurring within a family of graphs, 
\textit{Comm. Algebra} \textbf{49} (2021), no.~4, 1534--1547.

\bibitem{LMS}
J.~Laubacher, M.~Medwid, and D.~Schuster,
Classifying character degree graphs with seven vertices,
\textit{Adv. Group Theory Appl.} \textbf{22} (2025), 79--121.

\bibitem{Lewis2001}
M.~L. Lewis,  
Solvable groups whose degree graphs have two connected components,  
\textit{J. Group Theory} \textbf{4} (2001), no.~3, 255--275.

\bibitem{Lewis2002}
M.~L. Lewis,  
A solvable group whose character degree graph has diameter 3, 
\textit{Proc. Amer. Math. Soc.} \textbf{130} (2002), no.~3, 625--630.

\bibitem{Lewis2006}
M.~L. Lewis, 
Character degree graphs of solvable groups of fitting height 2, 
\textit{Canad. Math. Bull.} \textbf{49} (2006), no.~1, 127--133.

\bibitem{Lewis2008}
M.~L. Lewis, 
An overview of graphs associated with character degrees and conjugacy class sizes in finite groups, 
\textit{Rocky Mountain J. Math.} \textbf{38} (2008), no.~1, 175--211.

\bibitem{LewisMeng2019}
M.~L. Lewis and Q.~Meng, 
Solvable groups whose prime divisor character degree graphs are 1-connected, 
\textit{Monatsh. Math.} \textbf{190} (2019), 541--548.

\bibitem{Lewis2026}
M.~L. Lewis and A.~Summers,
Classifying prime character degree graphs with eight vertices,
\textit{arXiv preprint} arXiv:2603.15851 (2026), 38 pp.

\bibitem{Manz1985}
O.~Manz, 
Degree problems II: $\pi$-separable character degrees, 
\textit{Comm. Algebra} \textbf{13} (1985), 2421--2431.

\bibitem{ManzStaszewski1988}
O.~Manz, R.~Staszewski, and W.~Willems, 
On the number of components of a graph related to character degrees, 
\textit{Proc. Amer. Math. Soc.} \textbf{103} (1988), no.~1, 31--37.

\bibitem{ManzWolf1989}
O.~Manz, W.~Willems, and T.~R. Wolf, 
The diameter of the character degree graph, 
\textit{J. Reine Angew. Math.} \textbf{402} (1989), 181--198.

\bibitem{Monius2022}
K.~M\"onius, 
Splitting fields of spectra of circulant graphs, 
\textit{J. Algebra} \textbf{594} (2022), 154--169.

\bibitem{Zuccari2014}
C.~P. Morresi Zuccari, 
Regular character degree graphs, 
\textit{J. Algebra} \textbf{411} (2014), 215--224.

\bibitem{Palfy1998}
P.~P. Pálfy, 
On the character degree graph of solvable groups I: three primes, 
\textit{Period. Math. Hungar.} \textbf{36} (1998), no.~1, 61--65.

\bibitem{Sivanesan2024}
G.~Sivanesan, C.~Selvaraj, and T.~Tamizh Chelvam, 
Eulerian character degree graphs of solvable groups, 
\textit{AKCE Int. J. Graphs Combin.} \textbf{21} (2024), no.~2, 161--166. 
doi:10.1080/09728600.2024.2309620.

\bibitem{Sivanesan2026}
G.~Sivanesan and C.~Selvaraj,
Laplacian eigenvalues of character degree graphs of solvable groups,
\textit{Vietnam J. Math.} \textbf{54} (2026), 117--134.
doi:10.1007/s10013-024-00705-y.

\bibitem{Wu2024}
Y.~Wu, Q.~Guo, J.~Yang, and L.~Feng, 
Splitting fields of some matrices of normal (mixed) Cayley graphs, 
\textit{Discrete Math.} \textbf{347} (2024), no.~5, 113914.


\end{thebibliography}
\end{document}